\newcommand{\ie}{{\it i.e. }}
\newcommand{\resp}{{\it resp. }}
\newcommand{\A}{\mathbf{A}}
\newcommand{\G}{\mathbb{G}}
\renewcommand{\L}{\mathbb{L}}
\newcommand{\Q}{\mathbf{Q}}
\newcommand{\Z}{\mathbf{Z}}
\newcommand{\sF}{\mathcal{F}}
\newcommand{\sG}{\mathcal{G}}
\newcommand{\sH}{\mathcal{H}}
\newcommand{\sK}{\mathcal{K}}
\newcommand{\sM}{\mathcal{M}}
\newcommand{\sO}{\mathcal{O}}
\newcommand{\sS}{\mathcal{S}}
\newcommand{\Ker}{\operatorname{Ker}}
\newcommand{\Coker}{\operatorname{Coker}}
\newcommand{\Hom}{\operatorname{Hom}}
\newcommand{\uHom}{\operatorname{\underline{Hom}}}
\newcommand{\Ext}{\operatorname{Ext}}
\newcommand{\HI}{{\operatorname{\bf HI}}}
\newcommand{\tors}{{\operatorname{tors}}}
\newcommand{\rat}{{\operatorname{rat}}}
\newcommand{\num}{{\operatorname{num}}}
\newcommand{\nr}{{\operatorname{nr}}}
\newcommand{\DM}{\operatorname{\bf DM}}
\newcommand{\cont}{{\operatorname{cont}}}
\newcommand{\et}{{\text{\rm \'et}}}
\renewcommand{\o}{{\text{\rm o}}}
\newcommand{\eff}{{\text{\rm eff}}}
\newcommand{\rd}{{\text{\rm rd}}}
\newcommand{\nrd}{{\text{$n$-\rm rd}}}
\newcommand{\by}{\xrightarrow}
\newcommand{\iso}{\by{\sim}}
\newcommand{\inj}{\hookrightarrow}
\newcommand{\surj}{\rightarrow\!\!\!\!\!\rightarrow}
\newcommand{\Surj}{\relbar\joinrel\surj} 
\newcommand{\colim}{\varinjlim}
\renewcommand{\lim}{\varprojlim}
\renewcommand{\qed}{\hfill $\Box$\medskip}
\renewcommand{\phi}{\varphi}
\renewcommand{\epsilon}{\varepsilon}
\newcounter{spec}
{\end{list}}%
\numberwithin{equation}{section}
\newtheorem{thm}{Theorem}[section]
\newtheorem{lemma}[thm]{Lemma}
\newtheorem{prop}[thm]{Proposition}
\newtheorem{cor}[thm]{Corollary}
\theoremstyle{definition}
\newtheorem{defn}[thm]{Definition}
\newtheorem{rk}[thm]{Remark}
\newtheorem{ex}[thm]{Example}
\begin{document}
\title{An $l$-adic norm residue epimorphism theorem}
\author{Bruno Kahn}
\address{CNRS, Sorbonne Université and Université Paris Cité, IMJ-PRG\\ Case 247\\4 place
Jussieu\\75252 Paris Cedex 05\\France}
\email{bruno.kahn@imj-prg.fr}
\begin{abstract} We show that the continuous étale cohomology groups \allowbreak $H^n_\cont(X,\Z_l(n))$ of smooth varieties $X$ over a finite field $k$ are spanned as $\Z_l$-modules by the $n$-th Milnor $K$-sheaf locally for the Zariski topology, for all $n\ge 0$. Here $l$ is a prime invertible in $k$. This is the first general unconditional result towards the conjectures of \cite{tate} which put together the Tate and the Beilinson conjectures relative to algebraic cycles on smooth projective $k$-varieties.
\end{abstract}
\date{October 9, 2024}
\keywords{Milnor K-theory, Tate-Beilinson conjecture, motivic cohomology}
\subjclass[2020]{11G25, 14C35, 19E15}
\maketitle

\section{Introduction}

Two fundamental conjectures on smooth projective varieties $X$ over a finite field $k$ are
\begin{itemize}
\item the Tate conjecture: for any $n\ge 0$, the order of the pole of the zeta function $\zeta(X,s)$ at $s=n$ equals the rank of the group of algebraic cycles of codimension $n$ over $X$, modulo numerical equivalence;
\item the Beilinson conjecture: for any $n\ge 0$, an algebraic cycle of codimension $n$ on $X$ with $\Q$-coefficients which is numerically equivalent to $0$ is rationally equivalent to $0$.
\end{itemize}

In the unpublished preprint \cite{tate}, I put these two conjectures together and reformulated them into a sheaf-theoretic statement involving all smooth (not necessarily projective) $k$-varieties. 

Actually, there are two reformulations in \cite{tate}: one with rational coefficients (Conjecture 8.12) and one with integral coefficients (Conjecture 9.6). The first one is elementary, involving cohomology of Milnor $K$-sheaves; the second one involves  motivic cohomology and also appears in the published paper \cite{glr} (Conj. 3.2 and Th. 3.4).

Here we shall be interested in the first reformulation. Let me recall it. Let $\sS$ denote the étale site of smooth $k$-varieties; as in \cite[Def. 2.1]{tate}, write $\Z_l(n)^c$ (\resp $\Q_l(n)^c$) for the object
$R\lim(\mu_{l^\nu}^{\otimes n})$ (\resp $\Z_l(n)^c\otimes \Q$) of
$D^+(Ab(\sS))$. Thus,
\[H^i_\et(X,\Z_l(n)^c)=H^i_\cont(X,\Z_l(n))\]
where the right hand side is Jannsen's continuous étale cohomology \cite{jannsen}, and similarly with $\Q_l$ coefficients.

As a first step, we have
\begin{equation}\label{eq0}
H^i(\Q_l(n)^c)=0 \text{ for } i<n 
\end{equation}
by \cite[Cor. 6.10 a)]{tate}. In particular, the presheaf $X\mapsto H^n_\cont(X,\Q_l(n))$ is already an étale sheaf. Then, by  \cite[Prop. 8.10 and its proof]{tate}, a version of a theorem of Tate \cite[Th. 3.1]{Tate} yields a homomorphism
\begin{equation}\label{eq-3}
K_n^M(X)\otimes \Z_l\to H^n_\cont(X,\Z_l(n))
\end{equation}
for any smooth $X$, where $K_n^M(X):=K_n^M(\Gamma(X,\sO_X))$; 
hence a homomorphism of associated Zariski sheaves
\begin{equation}\label{eq-4}
\sK_n^M\otimes \Z_l\to \sH^n(\Z_l(n)^c).
\end{equation}
and a fortiori a homomorphism of associated étale sheaves
\begin{equation}\label{eq-1}
\alpha^*\sK_n^M\otimes \Z_l\to \alpha^*\sH^n(\Z_l(n)^c)
\end{equation}
where $\alpha$ is the projection of $\sS$ on the big smooth Zariski site. 
By \eqref{eq0}, we then get a morphism in $D^+(\sS)$
\[\alpha^*\sK_n^M[-n]\otimes\Q_l\to\Q_l(n)^c.\]

For $n=0$, this morphism is not an isomorphism because the right hand side has two nonzero cohomology sheaves, coming from $H^0(k,\Q_l)$ and $H^1(k,\Q_l)$ \cite[Th. 4.6 b)]{tate}. To get the correct comparison morphism, we tensor with it to get
\begin{equation}\label{eq33} \alpha^*\sK_n^M[-n]\otimes^L \Q_l(0)^c\to
\Q_l(n)^c.
\end{equation}

Conjecture 8.12 of \cite{tate} states that \eqref{eq33} is an isomorphism. Note that $\Q_l(0)^c\simeq \Q_l\oplus \Q_l[-1]$ by \cite[Cor. 4.5 and Th. 4.6]{tate}. In concrete terms, \eqref{eq33} therefore induces homomorphisms
\[H^{i-n-1}(X,\sK_n^M)\otimes \Q_l\oplus H^{i-n}(X,\sK_n^M)\otimes \Q_l\to H^i_\cont(X,\Q_l(n))\]
for any smooth $X$, where the right group is Jannsen's continuous étale cohomology \cite{jannsen}, and Conjecture 8.12 predicts that they are isomorphisms. This can be viewed as an extension of the cohomological version of Tate's conjecture  saying that, in some sense, all continuous étale cohomology groups are generated by ``algebraic cycles'' (cohomology of Milnor $K$-sheaves: note that $H^n(X,\sK_n^M)\simeq CH^n(X)$), plus one transcendental element: the generator of $H^1_\cont(k,\Q_l)=\Hom_\cont(G_k,\Q_l)$ which sends Frobenius to $1$.

As a special case, this conjecture proposes a description of the first non-zero continuous étale cohomology group $H^n_\cont(X,\Q_l(n))$, which ought to be isomorphic to $H^0(X,\sK_n^M)\otimes \Q_l$ via \eqref{eq-1}. I realised recently that a refinement of the proof of \eqref{eq0} might give enough information on this group to approach this latter conjecture. This was successful, and we even get an integral statement which is the main result of this article:

\begin{thm}\label{t0} %For any smooth $k$-variety $X$, the map
%\begin{equation}\label{eq-2}
%H^0(X,\sK_n^M)\otimes \Q_l\to H^n_\cont(X,\Q_l(n))
%\end{equation}
%induced by 
The morphism \eqref{eq-1} is an epimorphism of Zariski sheav\-es, and even of presheaves if $n\le 2$. 
\end{thm}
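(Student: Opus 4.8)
\medskip
\noindent\textbf{Sketch of a proof.} The plan is to deduce Theorem~\ref{t0} from the Bloch--Kato (norm residue) theorem modulo $l^\nu$ by passing to the inverse limit over $\nu$, the passage being controlled by a refinement of the proof of \eqref{eq0}.

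\emph{Step 1 (surjectivity modulo $l^\nu$).} One computes $\Z_l(n)^c\otimes^L\Z/l^\nu\simeq\mu_{l^\nu}^{\otimes n}$, whence a short exact sequence of Zariski sheaves
\[
0\to\sH^n(\Z_l(n)^c)/l^\nu\to\sH^n(\mu_{l^\nu}^{\otimes n})\to{}_{l^\nu}\sH^{n+1}(\Z_l(n)^c)\to0,
\]
where $\sH^i(\mu_{l^\nu}^{\otimes n})$ denotes the Zariski sheaf associated to $U\mapsto H^i_\et(U,\mu_{l^\nu}^{\otimes n})$. By the Bloch--Kato theorem combined with the Gersten conjecture for Milnor $K$-theory (Bloch--Ogus--Gabber, Kerz), the norm residue map is an isomorphism of Zariski sheaves $\sK_n^M/l^\nu\iso\sH^n(\mu_{l^\nu}^{\otimes n})$; and since \eqref{eq-3} reduces modulo $l^\nu$ to the norm residue symbol, the composite
\[
\sK_n^M\to\sH^n(\Z_l(n)^c)\to\sH^n(\Z_l(n)^c)/l^\nu\hookrightarrow\sK_n^M/l^\nu
\]
(first arrow \eqref{eq-4}) is the canonical projection. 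Hence \eqref{eq-4} is surjective modulo $l^\nu$ for every $\nu$; equivalently, its cokernel $\sC$ (a Zariski sheaf) satisfies $\sC=l\sC$.

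\emph{Step 2 (passing to the limit; the main obstacle).} The spectral sequence of $R\lim$ gives an exact sequence of Zariski sheaves
\[
0\to\lim^1_\nu\sH^{n-1}(\mu_{l^\nu}^{\otimes n})\to\sH^n(\Z_l(n)^c)\to\lim_\nu\sK_n^M/l^\nu\to0 .
\]
This is where the refinement of the proof of \eqref{eq0} enters: beyond the vanishing $\sH^{n-1}(\Q_l(n)^c)=0$, the coniveau/weight analysis underlying \eqref{eq0} should yield that the pro-sheaf $\{\sH^{n-1}(\mu_{l^\nu}^{\otimes n})\}_\nu$ is Mittag--Leffler, and more sharply that $\sH^n(\Z_l(n)^c)$ is $l$-adically separated (Zariski-locally an extension of a finitely generated free $\Z_l$-module by bounded torsion). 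The Mittag--Leffler property kills the $\lim^1$-term, identifying $\sH^n(\Z_l(n)^c)$ with the $l$-adically complete sheaf $\lim_\nu\sK_n^M/l^\nu$; and since $\sC$, being a quotient of $\sH^n(\Z_l(n)^c)$, is then $l$-adically separated while $\sC=l\sC$, we get $\sC=\bigcap_\nu l^\nu\sC=0$, i.e.\ \eqref{eq-1} is an epimorphism of Zariski sheaves. I expect this to be the hard point: mere $l$-adic completeness of $\sH^n(\Z_l(n)^c)$ together with surjectivity modulo $l$ only makes the image of \eqref{eq-4} $l$-adically \emph{dense}, and one genuinely needs the finiteness supplied by the refined coniveau bound to know that it is \emph{closed}.

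\emph{Step 3 (the presheaf statement for $n\le2$).} Run the coniveau spectral sequence $H^p_\Zar(X,\sH^q(\Z_l(n)^c))\Rightarrow H^{p+q}_\cont(X,\Z_l(n))$. For $n\le2$ the only sheaves in cohomological degrees $q<n$ are $\sH^0$ and $\sH^1$; by \eqref{eq0} they are torsion, and the bound from Step 2 makes $\sH^0(\Z_l(n)^c)$ finite and gives $\sH^1(\Z_l(n)^c)$ vanishing higher Zariski cohomology on an irreducible $X$ (its Gersten complex is short, and finite or constant coefficients are flasque on irreducible spaces). Hence $H^n_\cont(X,\Z_l(n))$ is simply the group of global sections of $\sH^n(\Z_l(n)^c)$, and one must check that $K_n^M(\Gamma(X,\sO_X))\otimes\Z_l$ surjects onto it. For $n=0$ this is trivial; for $n=1$ it follows from the finite generation of $\sO(X)^\times$ and of $\Pic(X)$ for $X$ smooth over a finite field, which gives $H^1_\cont(X,\Z_l(1))=\sO(X)^\times\otimes\Z_l$; for $n=2$ one argues in the same spirit, the remaining points being to control the coniveau correction coming from $H^1_\Zar$ of the relevant Milnor $K$-sheaf and the discrepancy between ring-level and unramified $K_2^M$.
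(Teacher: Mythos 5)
Your Step 2 is indeed the crux, and it is a genuine gap: you candidly observe that surjectivity of \eqref{eq-4} mod $l^\nu$ for all $\nu$ only makes $\sC$ an $l$-divisible sheaf, and that killing $\sC$ requires some $l$-adic separatedness of the quotient, which you propose to extract from a ``refined coniveau/weight bound'' (Mittag--Leffler for the pro-sheaf, local finite generation of $\sH^n(\Z_l(n)^c)$). But this finiteness is not proved, and it is not obviously accessible: the stalks of $\sH^n(\Z_l(n)^c)$ are filtered colimits of the finitely generated $\Z_l$-modules $H^n_\cont(U,\Z_l(n))$ over shrinking opens $U$, and such colimits need not be finitely generated, so a divisible quotient need not vanish. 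In other words, the pincer ``image is $l$-adically dense $+$ target is separated $\Rightarrow$ image is everything'' is exactly what one would like, but the separatedness half is a substantial claim that your sketch does not substantiate, and it is not what the ``refinement of the proof of \eqref{eq0}'' alluded to in the introduction actually delivers.

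The paper's route is structurally different and sidesteps this issue entirely. It first proves the \emph{rational} statement (Theorem~\ref{t0Q}), that \eqref{eq-4}$\otimes\Q$ is an epimorphism; this is the genuinely hard input and it makes no use of Bloch--Kato at all --- it goes through de Jong alterations, the spectral sequence \eqref{eq1}--\eqref{eq2}, and the theory of reduced homotopic modules (Theorem~\ref{t1}, Theorems~\ref{t4} and \ref{t5}). Only then is the norm residue theorem brought in, and in a different way than yours: via Beilinson--Lichtenbaum one gets (Proposition~\ref{p7}) that the cokernel of the Zariski-sheafified map $\sH^n(\Z(n))\otimes\Z_l\to\sH^n_\cont(\Z_l(n))$ is torsion-free, while the rational theorem says the cokernel of \eqref{eq-4} is torsion; since \eqref{eq-4} factors as in \eqref{eq6.2} with first arrow epi, its cokernel is simultaneously a quotient of a torsion-free sheaf and torsion, hence $0$. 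This ``torsion meets torsion-free'' argument replaces the limit/separatedness step you were hoping for, and it only requires finiteness facts that are already known (finite generation of each $H^i_\cont$ for each fixed $X$, not of stalks). Your Step~3 is likewise replaced: the presheaf statement for $n\le 2$ in the paper comes out of Theorem~\ref{t5} and the presheaf version of Lemma~\ref{l1}, not from a coniveau spectral sequence for $\Z_l(n)^c$. So the proposal identifies the correct obstruction but does not overcome it, and misses that the intended refinement of \eqref{eq0} feeds into the rational statement rather than into an $l$-adic limit argument.
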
 

This is the first general unconditional result in the direction of \cite[Conj. 8.12]{tate}. It can be viewed as an $l$-adic norm residue epimorphism theorem. As a complement, let us notice that the Zariski and étale sections of both sides coincide by \cite[Th. 14.24 and 22.2]{mvw}, and that, after tensoring with $\Q$, those of the right hand side on some smooth $X$ are $H^n_\cont(X,\Q_l(n))$ by \eqref{eq0}. So, for $n\le 2$, Theorem \ref{t0} yields a surjection
\[H^0(X,\sK_n)\otimes \Q_l\Surj H^n_\cont(X,\Q_l(n))\]
for all smooth $X$.

To avoid any misunderstanding, let me point out that Theorem \ref{t0} is \emph{deduced} from its rational version, Theorem \ref{t0Q}, by using the norm residue isomorphism theorem of \cite{voem}, while the proof of Theorem \ref{t0Q} itself has nothing to do with the latter theorem.

%\enlargethispage*{20pt}

One may ask about isomorphy in Theorem \ref{t0Q}. But, after tensorisation with $\Q$, the global sections of the right hand side of \eqref{eq-1} are $0$ on $X$ if $X$ is projective (provided $n>0$), so this would imply the vanishing of $H^0(X,\sK_n^M)\otimes \Q$ for such $X$. Conversely, this vanishing for all smooth projective varieties would imply that \eqref{eq-1}$\otimes \Q$ is an isomorphism: see beginning of Section \ref{s7}. It can actually be proven for certain smooth projective $X$'s (Theorem \ref{t6}), but there aren't enough of them to deduce the isomorphy of \eqref{eq-1} in general. See nevertheless Corollary \ref{c3} and Example \ref{ex1}  for examples.

The proof of Theorem \ref{t0Q} is not difficult, but involves a number of ideas. Here is a description. By de Jong's theorem on alterations, we reduce to the case where $X$ has a smooth compactification whose closed complement is the support of a divisor with strict normal crossings. A suitable spectral sequence, plus cohomological purity, then allows us to get a concrete description of $H^n_\cont(X,\Q_l(n))$, as in Corollary \ref{c1} b). This description already shows that these cohomology classes are, in some sense, of an algebraic nature, and the next step is to make the link with \eqref{eq-1}. Here we pass to Voevodsky's theory of homotopy invariant Nisnevich sheaves with transfers \cite{mvw} and its extension by Déglise to \emph{homotopic modules} \cite{deglise}. It turns out that the collection of the $H^n_\cont(X,\Q_l(n))$'s, for $n\in \Z$, defines a special kind of homotopic module that we call \emph{reduced} (Definition \ref{d4}; see Proposition \ref{p5}). The Milnor $K$-sheaves, on their part, form a homotopic module which maps to the latter via \eqref{eq-1}, but this homotopic module is not known to be reduced (this is precisely the vanishing issue mentioned in the previous paragraph). However, any ($(-1)$-connected) homotopic module admits a universal map to a reduced one, and fortunately this map is epi (Theorem \ref{t1}); the proof involves a generalisation of the theory of triangulated birational motives of \cite{birat-tri} to Verdier quotients of Voevodsky's category $\DM^\eff(k)$ by higher powers of the Tate object, in the spirit of \cite{tatefil}. The reduced homotopic module associated to $\sK_*^M\otimes \Q_l$ therefore maps to the homotopic module of continuous étale sheaves, and a comparison using an analogue of Corollary \ref{c1} b) shows that this is an isomorphism. %To get the integral statement, we finally use the Bloch-Kato conjecture as proven by Voevodsky and Rost.

\section{The $l$-adic computation}\label{s1}

Let $\bar X$ be a smooth projective geometrically irreducible variety over a field $k$, $Z=\bigcup_{i\in I} Z_i\subset \bar X$ a normal crossing divisor and $X=\bar X - Z$. For $J\subseteq I$, write $Z_J=\bigcap_{i\in J} Z_i$, in particular (by convention) $Z_\emptyset = \bar X$.

Let $(i,n)\in \Z\times \Z$. If $H^i(V,n)$ denotes continuous étale cohomology $H^i_\cont(V,\Q_l(n))$ \cite{jannsen}, the exact sequences for cohomology with supports and the reasoning of \cite[3.3]{arason} yield a spectral sequence
\begin{equation}\label{eq1}
E_1^{p,q} = \bigoplus_{|J|=d-p} H^q_{Z_J}(\bar X,n)\Rightarrow H^{p+q-d}(X,n)
\end{equation}
where $d=\dim X$, and where the $d^1$ differentials are given by Gysin maps. By purity \cite{jannsen}, we have
\begin{equation}\label{eq3}
H^q_{Z_J}(\bar X,n)\simeq H^{q-2(d-p)}(Z_J,n+p-d).
\end{equation}

This yields

\begin{prop}\label{p1} Suppose $k$ finite. Then $E_1^{p,q}=0$ unless $q\in \{2n,2n+1\}$, $d-n\le p\le d$ and $n\le d$.
\end{prop}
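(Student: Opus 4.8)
The plan is to reduce, via the purity isomorphism \eqref{eq3}, to a vanishing statement for the continuous étale cohomology of a smooth projective variety over $k$, which will then follow from Deligne's weight bounds. First observe that $E_1^{p,q}=0$ automatically unless $0\le p\le d$: for $p>d$ there is no $J$ with $|J|=d-p$, and for $p<0$ every $Z_J$ with $|J|=d-p>d$ is empty. So fix $p$ with $0\le p\le d$ and $J\subseteq I$ with $|J|=d-p$; then $Z_J$ is smooth projective of pure dimension $p$ over $k$ (and an empty $Z_J$ contributes $0$), and \eqref{eq3} identifies the corresponding summand of $E_1^{p,q}$ with $H^m_\cont(Z_J,\Q_l(r))$, where $m=q-2(d-p)$ and $r=n+p-d$. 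Hence the proposition is equivalent to the claim that, for a smooth projective variety $Y$ of dimension $p$ over a finite field $k$,
\[H^m_\cont(Y,\Q_l(r))=0\qquad\text{unless }\ m\in\{2r,2r+1\}\ \text{ and }\ 0\le r\le p ;\]
indeed, substituting back $m=q-2(d-p)$ and $r=n+p-d$ turns $m\in\{2r,2r+1\}$ into $q\in\{2n,2n+1\}$, $r\ge 0$ into $p\ge d-n$, and $r\le p$ into $n\le d$.

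To prove this claim I would use the Hochschild--Serre spectral sequence for continuous étale cohomology \cite{jannsen}, which relates $H^*_\cont(Y,\Q_l(r))$ to the Galois cohomology of the geometric groups $H^*_\et(Y_{\bar k},\Q_l(r))$, where $Y_{\bar k}:=Y\times_k\bar k$. Since $\operatorname{cd}(k)=1$ and the integral lattices $H^j_\et(Y_{\bar k},\Z_l)$ are finitely generated $\Z_l$-modules, so that the inverse systems involved satisfy the Mittag-Leffler condition, this spectral sequence degenerates with $\Q_l$-coefficients into short exact sequences
\[0\to H^1(G_k,H^{m-1}_\et(Y_{\bar k},\Q_l(r)))\to H^m_\cont(Y,\Q_l(r))\to H^0(G_k,H^m_\et(Y_{\bar k},\Q_l(r)))\to 0 .\]
Now $H^j_\et(Y_{\bar k},\Q_l)$ vanishes outside $0\le j\le 2p$, and for $Y$ smooth projective it is pure of weight $j$ by Deligne, so $H^j_\et(Y_{\bar k},\Q_l(r))$ is pure of weight $j-2r$. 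Writing $F$ for the Frobenius topological generator of $G_k\cong\hat\Z$, both $H^0(G_k,V)=\Ker(F-1)$ and $H^1(G_k,V)=\Coker(F-1)$ vanish as soon as $1$ is not an eigenvalue of $F$ on the finite-dimensional $\Q_l$-space $V$; and on a pure space of weight $w$ every eigenvalue of $F$ has archimedean absolute value $|k|^{w/2}$, so $1$ can occur only when $w=0$. Applied to the two outer terms this forces $m=2r$ (right-hand term) or $m-1=2r$ (left-hand term), and in either case the pertinent geometric group is $H^{2r}_\et(Y_{\bar k},\Q_l(r))$, which is nonzero only if $0\le 2r\le 2p$. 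This is precisely the asserted vanishing.

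The argument is formal once these ingredients are in place, and I expect the only point requiring genuine care to be the behaviour of continuous étale cohomology: one must check that over a finite field, with $\Q_l$-coefficients, Jannsen's spectral sequence really collapses to the two-term exact sequences above with no residual $\lim^1$ contribution — which is exactly where finiteness of the lattices $H^j_\et(Y_{\bar k},\Z_l)$, together with $\operatorname{cd}(k)=1$, is used. The remaining inputs — Deligne's purity theorem for smooth projective varieties and the vanishing $H^j_\et(Y_{\bar k},\Q_l)=0$ for $j\notin[0,2p]$ — are entirely standard.
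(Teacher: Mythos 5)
Your proposal is correct and follows essentially the same route as the paper's proof: both use purity \eqref{eq3} to identify the $E_1$ terms with continuous étale cohomology of the smooth projective strata $Z_J$, then invoke the Weil conjectures plus cohomological dimension to derive the three vanishing conditions. The only real difference is one of granularity: where the paper says tersely ``the first condition follows from the Weil conjecture'' and deduces $n\le d$ from the arithmetic cohomological dimension $2p+1$ of $Z_J$, you unfold the mechanism explicitly via the degenerate Hochschild--Serre spectral sequence over $G_k\cong\hat\Z$ and Deligne's weight purity on $H^j_\et((Z_J)_{\bar k},\Q_l)$, obtaining the bound $r\le p$ from the geometric cohomological dimension $2p$ of $(Z_J)_{\bar k}$ instead. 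These are the same argument viewed at different levels of detail, and your reformulation in terms of a single clean claim about $H^m_\cont(Y,\Q_l(r))$ for $Y$ smooth projective of dimension $p$, together with the check that the back-substitution reproduces all three conditions, is a tidy and accurate way to present it.
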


\begin{proof} The first condition follows from the Weil conjecture. In the second condition, the upper bound is clear, while the lower bound follows from the inequality $q-2(d-p)\ge 0$ and the first condition. For the third condition, the étale cohomological dimension of $Z_J$ is $2(d-|J|)+1=2p+1$, hence $E_1^{p,q}=0$ unless $q-2(d-p)\le 2p+1$, i.e. $q\le 2d+1$, which in turn implies $n\le d$ by the first condition.
\end{proof}

\begin{cor}\label{c1} We have\\
a)  long exact sequences
\[\dots\to E_2^{r-2n,2n}\to H^{r-d}(X,n)\to E_2^{r-2n-1,2n+1}\to E_2^{r-2n+1,2n}\to \dots\]
b) $H^i(X,n)=0$ unless  $n\le d$ and $i\ge n$, and an exact sequence
\[0\to H^n(X,n)\to  \bigoplus_{|J|=n} H^0(Z_J,0)\by{i_n} \bigoplus_{|J|=n-1} H^2(Z_J,1)\]
where $i_n$ is given by the Gysin maps in continuous étale cohomology.
\end{cor}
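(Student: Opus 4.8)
For part (a), the plan is to read everything off Proposition \ref{p1}: it says that the spectral sequence \eqref{eq1} is concentrated on the two consecutive rows $q=2n$ and $q=2n+1$, so the differentials $d_r$ for $r\ge 3$ all vanish (source or target would have to lie outside this two-row strip), and hence the spectral sequence degenerates at $E_3$. Part (a) is then purely formal: a cohomological spectral sequence supported on two adjacent rows induces on each abutment group $H^{r-d}(X,n)$ a two-step filtration whose graded pieces are the two surviving $E_\infty=E_3$ terms. Writing $E_3$ as the homology of $(E_2,d_2)$ on each row — on the line $q=2n$ this is a cokernel of an incoming $d_2$, since the outgoing $d_2$ lands in the empty row $2n-1$, and on the line $q=2n+1$ it is a kernel of an outgoing $d_2$, since the incoming $d_2$ comes from the empty row $2n+2$ — and splicing the short exact sequences $0\to E_\infty^{r-2n,2n}\to H^{r-d}(X,n)\to E_\infty^{r-2n-1,2n+1}\to 0$ over all $r$, with the connecting maps induced by $d_2$, produces exactly the long exact sequence of (a). The one thing I would check carefully is the orientation of the abutment filtration, i.e.\ that $E^{r-2n,2n}_\infty$ is the subobject and $E^{r-2n-1,2n+1}_\infty$ the quotient; this is what pins down the directions of the arrows.

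For part (b), the vanishing is immediate: by Proposition \ref{p1} a nonzero $E_1^{p,q}$ forces $n\le d$, $q\ge 2n$ and $p\ge d-n$, hence the abutment total degree $p+q-d$ is $\ge n$; since this is the cohomological degree $i$, we get $H^i(X,n)=0$ for $i<n$, and $n\le d$ is necessary for any nonzero group. For the exact sequence I would specialise the long exact sequence of (a) to total degree $i=n$, i.e.\ $r=n+d$. The point is that the two neighbouring terms $E_2^{r-2n-1,2n+1}=E_2^{d-n-1,2n+1}$ and $E_2^{r-2n-2,2n+1}=E_2^{d-n-2,2n+1}$ vanish already at the $E_1$ stage, their $p$-index being $<d-n$ (Proposition \ref{p1}); so the long exact sequence collapses there to an isomorphism $H^n(X,n)\iso E_2^{d-n,2n}$. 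It then remains to identify $E_2^{d-n,2n}$: the incoming $d_1$ comes from $E_1^{d-n-1,2n}$, which again vanishes for $p$-index reasons, so $E_2^{d-n,2n}$ is simply the kernel of the outgoing $d_1$. Under the purity isomorphism \eqref{eq3} the source $E_1^{d-n,2n}$ is $\bigoplus_{|J|=n}H^0(Z_J,0)$, the target $E_1^{d-n+1,2n}$ is $\bigoplus_{|J|=n-1}H^2(Z_J,1)$, and the differential $d_1$ between them is the Gysin map $i_n$; this yields the asserted exact sequence $0\to H^n(X,n)\to\bigoplus_{|J|=n}H^0(Z_J,0)\by{i_n}\bigoplus_{|J|=n-1}H^2(Z_J,1)$.

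I do not expect a genuine obstacle: once Proposition \ref{p1} is available, the whole argument is formal. The only points requiring care are bookkeeping — tracking the index and Tate-twist shifts through \eqref{eq1} and the purity isomorphism \eqref{eq3}, verifying that the $d_1$ differential of \eqref{eq1} really does become the Gysin map after applying \eqref{eq3}, and fixing the direction of the maps in (a) through the orientation of the abutment filtration.
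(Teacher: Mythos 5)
Your proposal is correct and follows the same route as the paper: part (a) is the standard Wang-type long exact sequence for a spectral sequence concentrated on the two rows $q=2n,\,2n+1$, and part (b) comes from the remaining index constraints of Proposition \ref{p1}, which force $H^n(X,n)\iso E_2^{d-n,2n}=\Ker(d_1\colon E_1^{d-n,2n}\to E_1^{d-n+1,2n})$, after which purity \eqref{eq3} identifies the terms. Your write-up is just a more explicit version of the paper's two-line argument; the bookkeeping (filtration orientation, degeneration at $E_3$, translation of $d_1$ into Gysin maps) is all handled correctly.
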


\begin{proof} a) is obvious from the first condition on $q$ in Proposition \ref{p1}, and b) then follows from the other conditions. Indeed, all terms in a) are $0$ if $r-2n<d-n$, i.e. if $r-d<n$. If now $r-d=n$, then the middle term is isomorphic to $\Ker(E_1^{d-n,2n}\by{d_1^{}} E_1^{d-n+1,2n})$, hence the conclusion.
\end{proof}

\begin{rk} If $|I|\le d$ and $n> |I|$, we get a sharper vanishing bound: $H^i(X,n)=0$ for $i<2n-|I|$, and an exact sequence
\[0\to H^{2n-|I|}(X,n)\to H^0(Z_I,0)\to \bigoplus_{|J|=|I|-1} H^2(Z_J,1). \]
\end{rk}

\section{Reduced homotopic modules}

We go back temporarily to a general perfect field $k$, and write $\HI$ for the category of homotopy invariant Nisnevich sheaves with transfers over $k$ \cite[Lect. 13]{mvw}. Let $\HI^\o$ be the full subcategory of $\HI$ consisting of birational sheaves \cite[Def. 2.3.1]{birat-tri}. By \cite[7.1 and Th. 7.3.1]{birat-tri}, the inclusion functor $\HI^\o\inj \HI$ has a right adjoint $\sF\mapsto \sF_\nr=R^0_\nr \sF$.

\begin{defn}\label{d1} A sheaf $\sF\in \HI$ is \emph{reduced} if $\sF_\nr=0$.
\end{defn}

\begin{lemma}\label{l1} Let $\sF\in \HI$. Then the presheaf with transfers
\[\sF_\rd=\Coker(\sF_\nr\to \sF) \]
is a reduced (Nisnevich) sheaf, and the functor $\sF\mapsto \sF_\rd$ is left adjoint to the inclusion of reduced sheaves into $\HI$. 
\end{lemma}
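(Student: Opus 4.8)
The plan is to reduce everything to three properties of the coreflector $(-)_\nr=R^0_\nr$ and then chase diagrams. The properties are: (i) the counit $c_\sF\colon\sF_\nr\to\sF$ is a monomorphism, so that $\sF_\nr$ is the largest birational subsheaf of $\sF$; (ii) $\HI^\o$ is closed in $\HI$ under subobjects, quotients and extensions; (iii) every birational sheaf $\sG$ has vanishing positive Nisnevich cohomology on every smooth $k$-scheme. I expect (iii) to be the crux. To prove it I would note that, for $X$ smooth connected with generic point $\eta$, a birational $\sG$ satisfies $\sG|_{X_\Nis}\cong(i_\eta)_*(\sG|_\eta)$: the two sheaves have the same sections on any étale $U\to X$ because each component of $U$ dominates $X$, so its fibre over $\eta$ is the spectrum of its own function field, and then birationality of $\sG$ identifies the sections; since the small Nisnevich site of a field has cohomological dimension $0$, the Leray spectral sequence for $i_\eta$ gives $H^{>0}_\Nis(X,\sG)=0$. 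Then (i) would follow: $\ker(c_\sF)\subseteq\sF_\nr$ is birational by (ii), and it is reduced because $(-)_\nr$ is left exact (being a right adjoint) while $(c_\sF)_\nr$ is an isomorphism by the triangle identities (the unit of $\iota\dashv(-)_\nr$ being invertible since $\iota$ is fully faithful), so $\ker(c_\sF)$, being both birational and reduced, is $0$. And (ii) would follow from (iii) together with the injectivity of restriction to generic points for objects of $\HI$ \cite{mvw}: for sub- and quotient sheaves this forces the restriction maps to dense opens to be bijective, and for extensions one applies the five lemma to the cohomology sequences over $X$ and over a dense open $U\subseteq X$, which are short exact by (iii). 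Some of (i)--(iii) may well already be recorded in \cite{birat-tri}, in which case one simply cites them.

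Granting (i)--(iii), I would argue as follows. First, $\sF_\rd$ is a Nisnevich sheaf: writing $\sQ$ for the sheaf cokernel of $c_\sF$, so that $0\to\sF_\nr\by{c_\sF}\sF\to\sQ\to0$ is exact in $\HI$, the long exact Nisnevich cohomology sequence and $H^1_\Nis(X,\sF_\nr)=0$ (from (iii)) show that $\sF(X)\to\sQ(X)$ is surjective with kernel $\sF_\nr(X)$ for every smooth $X$; hence the presheaf cokernel $X\mapsto\sF(X)/\sF_\nr(X)$ already equals $\sQ$. This presheaf is visibly homotopy invariant and has transfers, so $\sF_\rd=\sQ\in\HI$. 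Second, $\sF_\rd=\sF/\sF_\nr$ is reduced: by the lattice isomorphism for the epimorphism $\sF\to\sF/\sF_\nr$, any birational subsheaf of $\sF_\rd$ is $\sG/\sF_\nr$ for some $\sF_\nr\subseteq\sG\subseteq\sF$, and then $\sG$ is an extension of the birational sheaf $\sG/\sF_\nr$ by the birational sheaf $\sF_\nr$, hence birational by (ii), hence $\sG\subseteq\sF_\nr$ by (i); so $\sG=\sF_\nr$ and $\sG/\sF_\nr=0$. Thus $\sF_\rd$ has no nonzero birational subsheaf, i.e.\ $(\sF_\rd)_\nr=0$ by (i).

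Finally, for the adjunction: given $\sF\in\HI$ and a reduced $\sG$, applying $\Hom_\HI(-,\sG)$ to $\sF_\nr\by{c_\sF}\sF\to\sF_\rd\to0$ yields exactness of
\[0\to\Hom_\HI(\sF_\rd,\sG)\to\Hom_\HI(\sF,\sG)\by{c_\sF^*}\Hom_\HI(\sF_\nr,\sG),\]
and the last group vanishes since $\Hom_\HI(\sF_\nr,\sG)=\Hom_\HI(\iota\sF_\nr,\sG)=\Hom_{\HI^\o}(\sF_\nr,\sG_\nr)=\Hom_{\HI^\o}(\sF_\nr,0)=0$ by $\iota\dashv(-)_\nr$ and reducedness of $\sG$. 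Therefore precomposition with the canonical projection $\sF\to\sF_\rd$ is a bijection $\Hom_\HI(\sF_\rd,\sG)\iso\Hom_\HI(\sF,\sG)$, natural in both variables; since the subcategory of reduced sheaves is full, this exhibits $\sF\mapsto\sF_\rd$ as left adjoint to its inclusion into $\HI$, with $\sF\to\sF_\rd$ the unit. The only real work is in (iii) and its two corollaries (i), (ii); the last two paragraphs are then purely formal.
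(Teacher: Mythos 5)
Your proof is correct but takes a genuinely different route for the reducedness of $\sF_\rd$. The paper establishes the sheaf property the same way you do (via the $H^1$-vanishing for $\sF_\nr$, cited as \cite[Lemma~2.3.2]{birat-tri}, which is your~(iii)), but for reducedness it passes to the derived setting: it applies the total right derived functor $R_\nr$ of \cite[3.1]{birat-tri} to the short exact sequence $0\to\sF_\nr\to\sF\to\sF_\rd\to0$, uses the $t$-structure statement of \cite[Th.~4.4.1]{birat-tri} to identify $R_\nr(\sF_\nr[0])=\sF_\nr[0]$, and reads off $R^0_\nr(\sF_\rd)=0$ from the long cohomology sequence of the triangle. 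You instead stay entirely in the abelian category $\HI$, showing that $\HI^\o$ is a Serre subcategory (your~(ii)) and running a lattice argument on the epimorphism $\sF\surj\sF_\rd$: any birational subsheaf of $\sF_\rd$ lifts to an extension of birationals inside $\sF$, hence lands in the maximal birational subsheaf $\sF_\nr$ and dies in the quotient. This is more elementary and self-contained---it avoids the passage through $\DM^\o$ and the triangulated/$t$-structure machinery---at the cost of needing~(ii) explicitly; note that~(ii) is in fact immediate from the criterion $\sF\in\HI^\o\iff\sF_{-1}=0$ (used elsewhere in the paper) together with exactness of the contraction $(-)_{-1}$, which is slicker than your five-lemma route via~(iii). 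Your derivation of~(iii) itself (Gersten-type identification $\sG|_{X_{\Nis}}\cong(i_\eta)_*\sG|_\eta$ for birational $\sG$) is a valid re-proof of the cited lemma, and your spelled-out adjunction argument is exactly what the paper calls ``obvious.''
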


\begin{proof} By \cite[Lemma 2.3.2]{birat-tri} we have $H^1(X,\sF_\nr)=0$ for any smooth $X$, hence a short exact sequence
\[0\to \sF_\nr(X)\to \sF(X)\to a\sF_\rd(X)\to 0\] 
where $a\sF_\rd$ is the Nisnevich sheaf associated to $\sF_\rd$; therefore $\sF_\rd\to a\sF_\rd$ is an isomorphism of presheaves. Applying now the functor $R_\nr$ of \cite[3.1]{birat-tri} to the exact sequence $0\to \sF_\nr\to \sF\to \sF_\rd\to 0$, we get an exact triangle in $\DM^\o$
\[R_\nr(\sF_\nr[0])\to R_\nr (\sF[0])\to R_\nr(\sF_\rd[0])\by{+1}\]

But $R_\nr(\sF_\nr[0])=\sF_\nr[0]$ because $R^0_\nr \sF[0]\in \DM^\o$ by \cite[Th. 4.4.1]{birat-tri} (the part on $t$-structures). Taking the long cohomology exact sequence for the homotopy $t$-structure of $\DM^\o$, it follows that $R^0_\nr(\sF_\rd)=0$, \ie that $\sF_\rd$ is reduced. That it defines the said left adjoint is now obvious.
\end{proof}

Here is a generalisation. Recall that $\sF\in \HI^\o \iff \sF_{-1}=0$, where $(-)_{-1}$ is Voevodsky's contraction \cite[Prop. 2.5.2]{birat-tri}. 

\begin{defn}\label{d2} a) A sheaf $\sF\in \HI$ is \emph{of coniveau $< n$} if $\sF_{-n}=0$.\footnote{This terminology will be justified by Lemma \ref{l4} a).} Write $\HI_{< n}$ for the full subcategory of $\HI$ consisting of sheaves of coniveau $< n$ (so that $\HI_{< 1}=\HI^\o$).\\
b) A sheaf $\sF\in \HI$ is \emph{$n$-reduced} if its only subsheaf of coniveau $< n$ is $0$.
\end{defn}

\begin{defn}\label{d3} We write $\DM_{< n}^\eff=\DM^\eff/\DM^\eff(n)$ (so that $\DM_{< 1}^\eff\allowbreak =\DM^\o$).
\end{defn}

The same yoga as in \cite{birat-tri} (Brown representability) gives

\begin{prop}\label{p2} The localisation functor $\nu_{< n}:\DM^\eff\to \DM_{< n}^\eff$ admits a (fully faithful) right adjoint $\iota_n$, which itself admits a right adjoint $R_{< n}:C\mapsto C_{< n}$. Moreover, there are functorial exact triangles
\[\nu^{\ge n} M\to M\by{\epsilon_M} \iota_n\nu_{<n} M\by{+1}\]
where $\epsilon_M$ is the unit of the adjunction $(\nu_{<n},\iota_n)$ and, as in  \cite[(1.1)]{tatefil}, $\nu^{\ge n}M = \uHom(\Z(n),M)(n)$.
\qed
\end{prop}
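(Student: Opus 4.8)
The plan is to run the same Brown-representability ``yoga'' as in \cite[\S\S 3 and 7]{birat-tri}, with $\DM^\eff(1)$ replaced by $\DM^\eff(n)$; the one genuinely non-formal ingredient, needed to identify the triangle, is Voevodsky's cancellation theorem. Throughout, $\DM^\eff=\DM^\eff(k)$ is the big, compactly generated, closed symmetric monoidal category, with compact generators the geometric motives $M_\gm(X)$, $X\in\Sm(k)$.

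First, by the cancellation theorem the functor $-\otimes\Z(n)$ is fully faithful and preserves coproducts; hence its essential image $\DM^\eff(n)$ is a localising subcategory of $\DM^\eff$, generated (as such) by the objects $M_\gm(X)(n)$, and these are still compact \emph{in $\DM^\eff$}. It follows that $\DM_{<n}^\eff=\DM^\eff/\DM^\eff(n)$ is again compactly generated and that $\nu_{<n}$ preserves coproducts; Brown representability then produces a right adjoint $\iota_n$ to $\nu_{<n}$, which is fully faithful (like the right adjoint of any Verdier quotient), with essential image the right orthogonal $\DM^\eff(n)^\perp$. Since $\DM^\eff(n)$ is generated by objects compact in $\DM^\eff$, this orthogonal is stable under coproducts, so $\iota_n$ too preserves coproducts; a second application of Brown representability, now to $\iota_n$ out of the compactly generated category $\DM_{<n}^\eff$, yields the right adjoint $R_{<n}:C\mapsto C_{<n}$. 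The point requiring the most care is this last step: it rests on $\iota_n$ being cocontinuous, equivalently on the compactness of the generators $M_\gm(X)(n)$ of $\DM^\eff(n)$ \emph{inside the ambient category} $\DM^\eff$ (rather than merely inside $\DM^\eff(n)$).

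For the triangles, the Bousfield formalism attached to the localisation pair $(\nu_{<n},\iota_n)$ furnishes, functorially in $M$, an exact triangle $\Gamma M\to M\by{\epsilon_M}\iota_n\nu_{<n}M\by{+1}$ with $\Gamma M\in\DM^\eff(n)$ and $\operatorname{cone}(\epsilon_M)\in\DM^\eff(n)^\perp$, and this pair of properties characterises the triangle up to unique isomorphism. It therefore suffices to check that $\nu^{\ge n}M=\uHom(\Z(n),M)(n)$, equipped with the evaluation map $\uHom(\Z(n),M)(n)\to M$ (the counit of $(-\otimes\Z(n),\uHom(\Z(n),-))$ at $M$), enjoys these two properties. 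Membership in $\DM^\eff(n)$ is clear. For the cone, one shows that for every $N\in\DM^\eff$ and $i\in\Z$ the induced map $\Hom_{\DM^\eff}(N(n)[i],\nu^{\ge n}M)\to\Hom_{\DM^\eff}(N(n)[i],M)$ is bijective: by the cancellation theorem the left-hand group identifies with $\Hom(N[i],\uHom(\Z(n),M))=\Hom(N(n)[i],M)$, compatibly with evaluation, so the map is the identity. Since every object of $\DM^\eff(n)$ has the form $N(n)$, this says $\operatorname{cone}(\epsilon_M)\in\DM^\eff(n)^\perp$; by the uniqueness above, $\Gamma M\simeq\nu^{\ge n}M$, which gives the stated triangle. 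For $n=1$ one recovers the situation of \cite{birat-tri}.
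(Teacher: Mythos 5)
Your proposal is correct and follows exactly the ``Brown representability yoga'' that the paper invokes by reference to \cite{birat-tri}: compact generation of $\DM^\eff(n)$ by objects compact in $\DM^\eff$ (via cancellation), Brown representability twice, and then identification of the acyclization via the universal characterisation of the Bousfield triangle together with the cancellation theorem. This is the same argument the paper has in mind; you have simply written out the details it leaves implicit.
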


The key point is:

\begin{prop}\label{p3} The homotopy $t$-structure on $\DM^\eff$ induces a $t$-structure on $\DM_{< n}^\eff$ via $\iota_n$, with heart $\HI_{< n}$.
\end{prop}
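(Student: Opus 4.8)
The plan is to transport the homotopy $t$-structure of $\DM^\eff$ across the fully faithful right adjoint $\iota_n$, exactly as in the case $n=1$ treated in \cite[Th. 4.4.1]{birat-tri}. First I would set $\DM^\eff_{<n,\ge 0} = \{C \in \DM^\eff_{<n} : \iota_n C \in \DM^\eff_{\ge 0}\}$ and $\DM^\eff_{<n,\le 0} = \{C : \iota_n C \in \DM^\eff_{\le 0}\}$, using the homotopy $t$-structure on the right. Since $\iota_n$ is fully faithful, the orthogonality axiom $\Hom(\DM^\eff_{<n,\ge 1}, \DM^\eff_{<n,\le 0}) = 0$ is immediate from the corresponding statement in $\DM^\eff$. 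Stability under the shift is also clear. The only real content is the existence of truncation triangles inside $\DM^\eff_{<n}$: given $C \in \DM^\eff_{<n}$, I would apply the homotopy $t$-structure truncation to $\iota_n C$ to get $\tau_{\le -1}\iota_n C \to \iota_n C \to \tau_{\ge 0}\iota_n C \by{+1}$, then apply $\nu_{<n}$ and use $\nu_{<n}\iota_n \cong \id$ to obtain a candidate triangle $\nu_{<n}\tau_{\le -1}\iota_n C \to C \to \nu_{<n}\tau_{\ge 0}\iota_n C \by{+1}$.

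The main obstacle — and the step requiring the ``same yoga'' Déglise-style argument — is showing this candidate triangle really is a $t$-structure truncation, i.e. that $\nu_{<n}\tau_{\ge 0}\iota_n C$ lies in $\DM^\eff_{<n,\ge 0}$ (and dually). Equivalently, one must check that $\iota_n\nu_{<n}$ applied to a connective object of $\DM^\eff$ stays connective, which by Proposition \ref{p2} amounts to controlling the connectivity of $\nu^{\ge n}M = \uHom(\Z(n),M)(n)$ relative to the homotopy $t$-structure. Here I would invoke that $\uHom(\Z(n),-)$ is (up to the shift/twist bookkeeping) Voevodsky's $n$-fold contraction, which by \cite[Prop. 2.5.2]{birat-tri} and its iteration is $t$-exact for the homotopy $t$-structure up to the evident shift; combined with the fact that the Tate twist $(n)$ shifts the homotopy $t$-structure by $n$ (since $\Z(n)$ is $(n)$-fold suspension of a connective object and $H^0$ of $\Z(1)$ sits in the right degree), this gives the desired connectivity estimate and hence the truncation triangles. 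One then gets that $(\DM^\eff_{<n,\le 0}, \DM^\eff_{<n,\ge 0})$ is a $t$-structure.

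Finally, for the identification of the heart, I would argue that $C \in \DM^\eff_{<n}$ lies in the heart iff $\iota_n C$ is concentrated in homotopy degree $0$, i.e. $\iota_n C = \sF[0]$ for some $\sF \in \HI$; and $\sF[0]$ lies in the essential image of $\iota_n$ iff $\nu^{\ge n}(\sF[0])$ vanishes, which by the contraction computation just used is equivalent to $\sF_{-n} = 0$, i.e. to $\sF \in \HI_{<n}$ in the sense of Definition \ref{d2} a). Thus the heart is equivalent to $\HI_{<n}$, the equivalence being $C \mapsto H^0(\iota_n C)$ with quasi-inverse $\sF \mapsto \nu_{<n}(\sF[0])$. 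The case $n=1$ recovers \cite[Th. 4.4.1]{birat-tri}, which is a useful consistency check; I expect the only place where care is genuinely needed is the compatibility of the contraction functor with the homotopy $t$-structure under repeated application, and the bookkeeping of shifts in the triangle of Proposition \ref{p2}.
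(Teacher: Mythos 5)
Your proposal identifies the right key ingredient --- the $t$-exactness of the iterated contraction --- but takes a more indirect route than necessary, and the ``and dually'' in your middle step hides a genuine asymmetry. You propose to truncate $\iota_n C$ inside $\DM^\eff$, push down with $\nu_{<n}$, and then argue that $\iota_n\nu_{<n}$ preserves (co)connectivity by controlling $\nu^{\ge n}$. For the connective side this works: if $M\in\DM^\eff_{\ge 0}$ then $\nu^{\ge n}M\in\DM^\eff_{\ge 0}$, and the triangle of Proposition~\ref{p2} forces $\iota_n\nu_{<n}M\in\DM^\eff_{\ge 0}$. But the dual assertion, that $\iota_n\nu_{<n}$ carries $\DM^\eff_{\le 0}$ into $\DM^\eff_{\le 0}$ for \emph{arbitrary} $M$, does not follow from that triangle: the boundary map gives $H_1(\iota_n\nu_{<n}M)\cong\Ker\bigl(H_0(\nu^{\ge n}M)\to H_0(M)\bigr)$, which you have no way to kill in general. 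So the claimed ``connectivity estimate'' is one-sided, and your sketch as written has a gap.

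The clean fix --- and the paper's actual proof --- bypasses $\nu^{\ge n}$ entirely. Identify $\iota_n\DM^\eff_{<n}$ inside $\DM^\eff$ as the full subcategory $\{C : C_{-n}=0\}$, where $C_{-n}=\uHom(\Z(n)[n],C)$. Since $(-)_{-n}$ is $t$-exact (it is the $n$-fold iterate of Déglise's $t$-exact contraction $(-)_{-1}$, \cite[Th.~5.2]{deglise}), one has $(\tau_{\ge 0}C)_{-n}=\tau_{\ge 0}(C_{-n})=0$ and $(\tau_{\le 0}C)_{-n}=\tau_{\le 0}(C_{-n})=0$ whenever $C_{-n}=0$; so the subcategory is stable under both truncations and inherits the $t$-structure outright, with heart $\{\sF\in\HI : \sF_{-n}=0\}=\HI_{<n}$. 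This is exactly what rescues your argument too: for $C\in\DM^\eff_{<n}$, the object $\tau_{\le -1}\iota_n C$ already has vanishing $(-)_{-n}$, hence lies in the image of $\iota_n$, $\nu^{\ge n}$ of it is $0$, and $\iota_n\nu_{<n}$ acts as the identity on it --- but once you notice this, the detour is superfluous. One small correction of attribution: the $t$-exactness of the contraction at the level of $\DM^\eff$ is \cite[Th.~5.2]{deglise}, not \cite[Prop.~2.5.2]{birat-tri}, which only characterizes birational objects of the heart.
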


\begin{proof} For $C\in \DM^\eff$, write $C_{-n}=\uHom(\Z(n)[n],C)$. Then $C\in \iota_n\DM_{< n}^\eff$ $\iff$ $C_{-n}=0$. But this functor is $t$-exact as the $n$-fold composition of the $t$-exact functor $(-)_{-1}$ \cite[Th. 5.2]{deglise}.
\end{proof}

\begin{prop}\label{p4} a) The inclusion $\HI_{< n}\inj \HI$ has a right adjoint $\sF\mapsto \sF_{< n}$. Moreover, we have $(\sF_{-1})_{< n-1}=(\sF_{< n})_{-1}$ as subsheaves of $\sF_{-1}$.\\
b) The inclusion of $n$-reduced sheaves in $\HI$ has a left adjoint $\sF\mapsto \sF_\nrd$, and the unit morphism $\sF\to \sF_\nrd$ is an epimorphism of sheaves.
\end{prop}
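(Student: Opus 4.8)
Here is a plan.

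The strategy is to obtain part (b) and the bulk of part (a) by formal arguments, isolating Voevodsky's cancellation theorem as the one genuinely nontrivial ingredient, which enters only in the ``moreover'' clause of (a).

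\emph{Right adjoint in (a).} Since $(-)_{-n}$ is an exact, colimit-preserving endofunctor of the Grothendieck abelian category $\HI$, the full subcategory $\HI_{<n}=\Ker\bigl((-)_{-n}\bigr)$ is stable under colimits in $\HI$, so the inclusion $j\colon\HI_{<n}\hookrightarrow\HI$ preserves colimits and, by the same formal argument as for Proposition \ref{p2} (Brown representability), has a right adjoint $\sF\mapsto\sF_{<n}$ (alternatively one can produce it from the adjunctions of Proposition \ref{p2} and the $t$-structure of Proposition \ref{p3}). Exactness of $(-)_{-n}$ also makes $\HI_{<n}$ a Serre subcategory of $\HI$. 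The kernel of the counit $\sF_{<n}\to\sF$ lies in $\HI_{<n}$; transporting it across the isomorphism $\Hom_{\HI_{<n}}(-,\sF_{<n})\cong\Hom_\HI(-,\sF)$ shows this kernel is $0$, so $\sF_{<n}$ is the largest subsheaf of $\sF$ of coniveau $<n$, and $\sF$ is $n$-reduced exactly when $\sF_{<n}=0$.

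\emph{Part (b).} Set $\sF_\nrd:=\Coker(\sF_{<n}\to\sF)$; the unit $\sF\to\sF_\nrd$ is then an epimorphism of sheaves by construction. It is $n$-reduced: the preimage in $\sF$ of a coniveau-$<n$ subsheaf of $\sF/\sF_{<n}$ is an extension of it by $\sF_{<n}$, hence of coniveau $<n$ (Serre), hence contained in $\sF_{<n}$, so the subsheaf vanishes. Finally, for $\sE$ $n$-reduced any morphism $\sF\to\sE$ annihilates $\sF_{<n}$ — its restriction to $\sF_{<n}$ has image a coniveau-$<n$ subsheaf of $\sE$, which must be $0$ — hence factors uniquely through the epimorphism $\sF\to\sF_\nrd$; this exhibits $(-)_\nrd$ as the left adjoint to the inclusion of $n$-reduced sheaves.

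\emph{The ``moreover'' in (a), and the main obstacle.} The inclusion $(\sF_{<n})_{-1}\subseteq(\sF_{-1})_{<n-1}$ is immediate: $(-)_{-1}$ is exact, so $(\sF_{<n})_{-1}\hookrightarrow\sF_{-1}$, and $\bigl((\sF_{<n})_{-1}\bigr)_{-(n-1)}=(\sF_{<n})_{-n}=0$. For the reverse inclusion — which I expect to be the crux — I would lift $\sH:=(\sF_{-1})_{<n-1}\hookrightarrow\sF_{-1}$ to $\sF$ via the tensor--hom adjunction $(-)\otimes\Z(1)[1]\dashv(-)_{-1}$ on $\DM^\eff$: the inclusion corresponds to a morphism $g\colon\sH(1)[1]\to\sF$ in $\DM^\eff$, and since $\sH$ and $\Z(1)[1]=\G_m$ both lie in the connective part $\DM^{\eff,\le 0}$ (which is $\otimes$-closed and contains $\HI$), applying $\uH^0$ for the homotopy $t$-structure yields a morphism $\bar g\colon\uH^0(\sH(1)[1])\to\sF$ in $\HI$; let $\sG\subseteq\sF$ be its image. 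Voevodsky's cancellation theorem gives $\bigl(\sH(1)[1]\bigr)_{-1}\simeq\sH$, and since $(-)_{-1}$ is $t$-exact this descends to $\uH^0$: $\bigl(\uH^0(\sH(1)[1])\bigr)_{-1}\simeq\sH$, whence $\bigl(\uH^0(\sH(1)[1])\bigr)_{-n}\simeq\sH_{-(n-1)}=0$. Thus $\uH^0(\sH(1)[1])$, and therefore its quotient $\sG$, has coniveau $<n$, so $\sG\subseteq\sF_{<n}$. On the other hand, applying the exact functor $(-)_{-1}$ to $\bar g$ and using that the adjunction unit is the cancellation isomorphism identifies $\bar g_{-1}$ with the original inclusion $\sH\hookrightarrow\sF_{-1}$, so $\sG_{-1}=\sH$ inside $\sF_{-1}$. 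Hence $\sH=\sG_{-1}\subseteq(\sF_{<n})_{-1}$, completing the proof. The delicate points are exactly the coniveau bookkeeping through $\uH^0$ and $\otimes\Z(1)[1]$ and the matching of the two invocations of the adjunction, both of which rest on the cancellation theorem.
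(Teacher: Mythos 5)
Your proof is correct and essentially reproduces the paper's argument: the colimit construction of $\sF_{<n}$ from the exactness and colimit-preservation of $(-)_{-n}$, and, for the ``moreover'' clause, the adjunction $(-)\otimes\G_m\dashv(-)_{-1}$ combined with the cancellation theorem to transport the inclusion $(\sF_{-1})_{<n-1}\subseteq\sF_{-1}$ into $(\sF_{<n})_{-1}$, are exactly what the paper does. The one small deviation is in (b), where you show $\sF_\nrd$ is $n$-reduced by a direct abelian-category argument (that $\HI_{<n}$ is a Serre subcategory closed under extensions, plus maximality of $\sF_{<n}$), whereas the paper instead invokes Proposition \ref{p3} and replays the triangulated-category reasoning of Lemma \ref{l1}; your route is a modest simplification with the same content.
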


\begin{proof} Since $(-)_{-n}$ is exact and commutes with infinite direct sums, $\HI_{< n}$ is stable under arbitrary colimits; defining $\sF_{< n}=\colim \sG$, where $\sG$ runs through the subsheaves of $\sF$ which belong to $\HI_{< n}$, proves the first part of a). For the second part, the exactness of $(-)_{-1}$ gives an inclusion $(\sF_{< n})_{-1}\subseteq (\sF_{-1})_{< n-1}$; conversely, the inclusion $(\sF_{-1})_{< n-1}\subseteq \sF_{-1}$ yields by adjunction a morphism $(\sF_{-1})_{< n-1}\otimes \G_m\to \sF$, which factors through $\sF_{< n}$ by the cancellation theorem \cite{voecan}, hence $(\sF_{-1})_{< n-1}\subseteq (\sF_{< n})_{-1}$ by adjunction again.

For b), define $\sF_\nrd=\Coker(\sF_{< n}\to \sF)$. Using Proposition \ref{p3}, the same reasoning as in the proof of Lemma \ref{l1} shows that $\sF_\nrd$ is $n$-reduced, hence defines the desired left adjoint.
\end{proof}

\begin{rk} Contrary to Lemma \ref{l1}, the map $\sF\to \sF_\nrd$ may not be an epimorphism of presheaves if $n>1$.
\end{rk}

Recall from \cite{deglise} that a \emph{homotopic module} is an $\Omega$-$\G_m$-spectrum in $\HI$, \ie a sequence $(\sF_n)_{n\in \Z}$ of objects of $\HI$ provided with isomorphisms $\sF_n\iso (\sF_{n+1})_{-1}$. We shall say that a homotopic module $(\sF_n)$ is \emph{$(-1)$-connected} if $\sF_n=0$ for $n<0$. Write $\HI_*$ for the category of homotopic modules, and $\HI_*^c$ for its full subcategory of $(-1)$-connected homotopic modules.

\begin{defn}\label{d4} A $(-1)$-connected homotopic module $(\sF_n)_{n\ge 0}$ is \emph{reduced} if $\sF_n$ is reduced for all $n>0$. Write $\HI_*^\rd$ for the full subcategory of $\HI_*^c$ formed of reduced homotopic modules.
\end{defn}

\begin{lemma}\label{l2} If $(\sG_n)\in \HI_*^c$ is reduced, then $\sG_n$ is $n$-reduced for all $n\ge 0$.
\end{lemma}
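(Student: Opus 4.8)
The plan is to run a descending induction using the homotopic module structure together with the exactness of Voevodsky's contraction. First I would dispose of the case $n=0$, which is vacuous: a sheaf of coniveau $<0$ is zero because the $0$-fold contraction is the identity functor, so every sheaf is trivially $0$-reduced. So I fix $n\ge 1$, take an arbitrary subsheaf $\sH\subseteq \sG_n$ of coniveau $<n$ — by Definition \ref{d2} a) this means precisely $\sH_{-n}=0$ — and aim to show $\sH=0$.

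The key observation is that the contraction $(-)_{-1}$ on $\HI$ is exact (this is already invoked in the proof of Proposition \ref{p4}, and ultimately rests on \cite[Th. 5.2]{deglise}), hence so are all its iterates $(-)_{-m}$; in particular they preserve monomorphisms. Applying $(-)_{-m}$ to the inclusion $\sH\inj \sG_n$ and using the structure isomorphisms $(\sG_j)_{-1}\iso \sG_{j-1}$ of the homotopic module, I obtain monomorphisms $\sH_{-m}\inj \sG_{n-m}$ for all $0\le m\le n$.

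Then I would perform a descending induction on $m$, from $m=n$ down to $m=0$, proving $\sH_{-m}=0$. The base case $m=n$ is the hypothesis. For the inductive step, if $\sH_{-(m+1)}=0$ with $0\le m\le n-1$, then $(\sH_{-m})_{-1}=\sH_{-(m+1)}=0$, so $\sH_{-m}$ is a birational sheaf (recall $\sF\in \HI^\o \iff \sF_{-1}=0$); but it is also a subsheaf of $\sG_{n-m}$, which is reduced since $n-m\ge 1$, so $\sH_{-m}$ must vanish — its inclusion into $\sG_{n-m}$ factors through $(\sG_{n-m})_\nr=0$ by the adjunction of \cite[Th. 7.3.1]{birat-tri} and, being a monomorphism equal to $0$, is zero. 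Taking $m=0$ gives $\sH=\sH_{-0}=0$, as required.

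I do not expect a genuine obstacle: the argument is essentially bookkeeping. The only points that need a little care are lining up the indices of the transition isomorphisms of the homotopic module with those of the successive contractions, and the (formal) fact that ``reduced'' in the sense of Definition \ref{d1} is the same as ``having no nonzero birational subsheaf'' — which is exactly what makes the inductive step run, and which also recovers the equivalence ``reduced $=$ $1$-reduced'', consistent with the $n=1$ instance of the lemma.
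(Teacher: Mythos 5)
Your proof is correct and is essentially the paper's argument in a different packaging: the paper runs an ascending induction on $n$ (using the inductive hypothesis applied to $\sG_{n-1}$), while you unroll the same iteration into a descending induction on the contraction index $m$ for fixed $n$. Both hinge on exactly the same three ingredients — exactness of $(-)_{-1}$ to preserve the inclusion $\sH_{-m}\inj\sG_{n-m}$, the characterisation of birational sheaves by vanishing first contraction, and the fact that a reduced sheaf has no nonzero birational subsheaf via the adjunction of \cite[Th.\ 7.3.1]{birat-tri}.
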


\begin{proof} Induction on $n$. The case $n= 0$ is trivial. Suppose the statement true for $n-1\ge 0$, and let $\sH\subseteq  \sG_n$ with $\sH\in \HI_{< n}$. Then $\sH_{-1}\subseteq (\sG_n)_{-1}$ is $0$ since $\sH_{-1}\in \HI_{< n-1}$. As $\sG_n$ is reduced, we have $\sH=0$.
\end{proof}

\begin{thm}\label{t1} The inclusion $\HI_*^\rd\inj \HI_*^c$ has a left adjoint $(\sF_*)\mapsto (\sF_*)^\rd$. The unit of this adjunction is an epimorphism of graded sheaves.
\end{thm}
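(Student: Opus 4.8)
The plan is to build the reduced homotopic module degree by degree, mimicking the construction of $(-)_\nrd$ from Proposition \ref{p4} b) but threading it through the $\G_m$-loop isomorphisms so that the output is again a homotopic module. Given $(\sF_*)\in\HI_*^c$, the naive idea is to set $(\sF_*)^\rd_n=(\sF_n)_\nrd$ (the $n$-reduced replacement, which is reduced as a single sheaf), but this need not satisfy the compatibility $((\sF_*)^\rd_{n+1})_{-1}\simeq(\sF_*)^\rd_n$: contraction of the $n$-reduced sheaf $(\sF_{n+1})_\nrd$ is only $(n-1)$-reduced, not $n$-reduced in general. So I would instead first pass to the homotopic-module level. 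By Déglise \cite{deglise}, $\HI_*$ is the heart of the homotopy $t$-structure on the category $\DM(k)$ (or its effective variant), and $(-)_{-1}$ corresponds to a Tate twist; the reflection should therefore come from a localisation on the triangulated side, exactly as $(-)_\rd$ in Lemma \ref{l1} came from the birational localisation $\DM^\eff\to\DM^\o$.

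Concretely, here is the sequence of steps. First, I would identify the correct triangulated localisation: reduced homotopic modules should form the heart of the homotopy $t$-structure on a Verdier quotient of $\DM^\eff(k)$ analogous to the $\DM^\eff_{<n}$ of Definition \ref{d3}, but taken ``uniformly in all twists'' — i.e. a quotient of the category of $\G_m$-spectra by the subcategory generated by objects that are ``$1$-effective after looping'', in the spirit of \cite{tatefil}. Second, using Brown representability (the same yoga invoked for Proposition \ref{p2}), I would produce the localisation functor together with a fully faithful right adjoint, and check it is compatible with the homotopy $t$-structures, so that it induces on hearts a functor $\HI_*^c\to\HI_*^\rd$ with a fully faithful right adjoint — this gives the desired adjunction $(\sF_*)\mapsto(\sF_*)^\rd$ after verifying that the image of the right adjoint on hearts is exactly the reduced homotopic modules (for which Lemma \ref{l2} and the characterisation of reducedness via $(-)_{-1}$ will be used). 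Third, for the epimorphism statement, I would argue as in Proposition \ref{p4} b): the unit triangle on the triangulated level, $\nu^{\ge}(\sF_*)\to(\sF_*)\to(\sF_*)^\rd\by{+1}$, gives upon taking the long exact sequence of the homotopy $t$-structure that $(\sF_*)^\rd$ is reduced and that $(\sF_*)\to(\sF_*)^\rd$ is surjective on $\H_0$, provided the ``error term'' $\nu^{\ge}(\sF_*)$ has no $\H_{-1}$; the $(-1)$-connectivity of $(\sF_*)$ should force this, since the relevant twisted Hom groups that could contribute land in non-positive degrees.

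I expect the main obstacle to be setting up the triangulated localisation cleanly enough that it is simultaneously (a) compatible with all the $\G_m$-twist structure defining homotopic modules and (b) $t$-exact for the homotopy $t$-structure, so that passing to hearts is legitimate and identifies the heart of the localised category with $\HI_*^\rd$. In Proposition \ref{p3} the $t$-exactness of $(-)_{-1}$ was the crux for a single twist; here one must iterate this uniformly and control the interaction between the spectrum structure and the quotient — this is precisely where the promised generalisation of \cite{birat-tri} to quotients of $\DM^\eff(k)$ by higher powers of the Tate object, along the lines of \cite{tatefil}, does the work. A cheaper alternative, which I would fall back on if the triangulated machinery proves heavy, is to construct $(\sF_*)^\rd$ directly as a homotopic module by an explicit (transfinite) iteration: repeatedly quotient each $\sF_n$ by the image of its largest coniveau-$<n$ subsheaf \emph{and} by what is forced by the loop isomorphisms from degree $n+1$, then stabilise; Proposition \ref{p4} b) guarantees each step is an epimorphism of sheaves, so the colimit unit is too, and one checks reducedness degreewise using Lemma \ref{l2}.
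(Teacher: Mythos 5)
The degreewise construction you dismiss at the outset is exactly the one the paper uses, and it does work; your off-by-one worry is resolved by the second assertion of Proposition~\ref{p4}~a), which you do not seem to have registered (you cite only part b)). That assertion says $(\sF_{-1})_{< n-1}=(\sF_{< n})_{-1}$ as subsheaves of $\sF_{-1}$. Combined with the exactness of $(-)_{-1}$ applied to $0\to(\sF_{n})_{< n}\to\sF_{n}\to(\sF_{n})_\nrd\to 0$ and the bonding isomorphism $\sF_{n-1}\iso(\sF_n)_{-1}$, this gives $((\sF_n)_\nrd)_{-1}\iso(\sF_{n-1})_\nrd$ directly (the latter being the $(n-1)$-reduced replacement of $\sF_{n-1}$), with no iteration needed. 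Note also that if you form the $(n+1)$-reduced, not $n$-reduced, replacement of $\sF_{n+1}$ --- as the degree-matching scheme requires --- then even your crude ``contraction drops reducedness by one'' heuristic would predict an $n$-reduced contraction, which is the right target; part of your objection was simply misindexed. The rest of the paper's proof is then short: each $\sF_n^\rd$ is $n$-reduced hence reduced for $n\ge 1$ (since $\HI_{<1}\subseteq\HI_{<n}$); the unit is degreewise epi by Proposition~\ref{p4}~b); and the universal property follows from Lemma~\ref{l2}, because a map to a reduced homotopic module kills $(\sF_n)_{< n}$ in each degree.

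Given this, both of your alternatives are detours and neither is pinned down enough to verify. The stable triangulated localisation you sketch would live over a category of $\G_m$-spectra in which the Tate object is already invertible, so it is not clear what the subcategory you want to kill is, nor that the homotopy $t$-structure would descend to the quotient; the paper's triangulated work (Propositions~\ref{p2}--\ref{p3}) deliberately stays inside the effective category $\DM^\eff$, constructing a separate reflection onto $\DM^\eff_{<n}$ for each $n$, and then assembles the resulting degreewise reflections on hearts by the compatibility in Proposition~\ref{p4}~a) rather than by any single stable localisation. Your transfinite-iteration fallback is not wrong in principle but is unnecessary: the compatibility holds after a single application of $(-)_\nrd$ per degree, so there is nothing to stabilise.
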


\begin{proof} For $(\sF_*)\in \HI_*^c$ and $n\ge 0$, define
\[ \sF_n^\rd = (\sF_n)_\nrd.\]

By Proposition \ref{p4} a), the isomorphisms $\sF_{n-1}\iso (\sF_n)_{-1}$ induce isomorphisms $(\sF_{n-1})_{< n-1}\iso ((\sF_n)_{< n})_{-1}$, hence isomorphisms $\sF_{n-1}^\rd\iso (\sF_n^\rd)_{-1}$ by Proposition \ref{p4} b). Thus $(\sF_*)^\rd:=(\sF_*^\rd)\in \HI_*^c$, and this homotopic module is reduced. Its universal property now follows from Lemma \ref{l2}.
\end{proof}

%We shall also need the following result.

%\begin{prop} Suppose that $k$ is countable. Let $\sF$ be a homotopy invariant presheaf with transfers of $R$-modules, where $R$ is a commutative ring, whose associated Nisnevich sheaf is $0$. Suppose that, for any smooth $X$, the $R$-module $\sF(X)$ is finitely generated. Then $\sF=0$.
%\end{prop}

%\red{False for $\sF=\Pic$ when $k$ is finitely generated.}

%\begin{proof} By \cite[Cor. 4.18]{voepre}, we have $\sF(\Spec A)=0$ for any semi-local ring $A$ of $X$. Therefore, by the hypothesis on $\sF(X)$, for any finite set of points $S\subset X$ there exists an open neighbourhood $U(S)$ of $S$ such that the map $\sF(X)\to \sF(U_S)$ is $0$. Choose a well-ordering $(x_n)_{n\ge 1}$ on the (countable) set of points of $X$ and write $U_n=U(\{x_1,\dots,x_n\})$: we may choose the $U_n$ inductively so that $U_n\subseteq U_{n+1}$ \red{No}.  Since $X$ is Noetherian, these inclusions become equalities for $n$ large enough; but then $U_n=X$ for $n$ sufficiently large, and $\sF(X)=0$. 
%\end{proof}

\section{Cohomology}

\begin{lemma}\label{l3} Let $\sF\in \HI$. If $F$ is a smooth closed subset of pure codimension $c$ in a smooth $k$-scheme $X$, there are isomorphisms $H^i_F(X,\sF)\allowbreak\simeq H^{i-c}(F,\sF_{-c})$, hence a long exact sequence for $U=X-F$:
\[\dots\to H^{i-c}(F,\sF_{-c}) \to H^i(X,\sF)\to H^i(U,\sF)\by{\partial} H^{i+1-c}(F,\sF_{-c}) \to\dots\]
In particular, we have $\sF(X)\iso \sF(U)$ if $c>1$, and an exact sequence
\[0\to \sF(X)\to \sF(U)\to \sF_{-1}(F)\]
if $c=1$.
\end{lemma}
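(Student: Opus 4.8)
\medskip
\noindent\emph{Proof proposal.} My plan is to deduce everything from the Gysin distinguished triangle in $\DM^\eff(k)$, together with the standard identification $\Hom_{\DM^\eff(k)}(M(Y),\sF[i])\simeq H^i(Y,\sF)$ (Nisnevich cohomology) valid for any $\sF\in\HI$, regarded as an object of the heart of the homotopy $t$-structure, and any smooth $Y$: homotopy invariance of $\sF$ identifies $C_*\sF$ with $\sF$ in the Nisnevich-local derived category, so that $\Hom$-group computes the Nisnevich hypercohomology of $\sF$, which is $H^i(Y,\sF)$. Below I abbreviate $\Hom:=\Hom_{\DM^\eff(k)}$.

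First I would apply the Gysin triangle \cite{mvw} to the closed immersion $F\inj X$ of smooth $k$-schemes of pure codimension $c$, with open complement $j\colon U\inj X$: it reads
\[M(U)\by{M(j)} M(X)\to M(F)(c)[2c]\by{+1}\]
in $\DM^\eff(k)$. Applying $R\Hom(-,\sF)$ and taking cohomology yields the long exact sequence
\[\cdots\to \Hom(M(F)(c)[2c],\sF[i])\to H^i(X,\sF)\to H^i(U,\sF)\to \Hom(M(F)(c)[2c],\sF[i+1])\to\cdots\]
in which $\Hom(M(F)(c)[2c],\sF[i])$ is, by construction, the cohomology with supports $H^i_F(X,\sF)$: the latter is the $i$-th cohomology of the fibre of $R\Gamma(X,\sF)\to R\Gamma(U,\sF)$, which under the above dictionary is $\Hom(M(X)/M(U),\sF[i])$ with $M(X)/M(U)\simeq M(F)(c)[2c]$.

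The second step identifies this group with $H^{i-c}(F,\sF_{-c})$. By the tensor/internal-Hom adjunction it equals $\Hom(M(F),\uHom(\Z(c)[c],\sF)[i-c])$, so it suffices to show that $\uHom_{\DM^\eff(k)}(\Z(c)[c],\sF)\simeq\sF_{-c}$ is concentrated in degree $0$ and equals the $c$-fold Voevodsky contraction. This object is the $c$-fold iterate of $\uHom(\Z(1)[1],-)$ applied to $\sF$, and $\uHom(\Z(1)[1],\sF)\simeq\sF_{-1}$; that the latter lies in $\HI$ — equivalently, that the contraction $(-)_{-1}$ is $t$-exact — is precisely the fact used in the proof of Proposition~\ref{p3}, resting on \cite[Th.~5.2]{deglise} and ultimately on the cancellation theorem \cite{voecan}, so iterating is harmless. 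Substituting the resulting canonical isomorphism $H^i_F(X,\sF)\simeq H^{i-c}(F,\sF_{-c})$ into the long exact sequence above — equivalently, into the standard localization sequence $\cdots\to H^i_F(X,\sF)\to H^i(X,\sF)\to H^i(U,\sF)\to H^{i+1}_F(X,\sF)\to\cdots$ — gives the displayed sequence.

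The ``in particular'' clauses are then immediate: Nisnevich cohomology of a sheaf vanishes in negative degrees, so for $c>1$ both $H^{-c}(F,\sF_{-c})$ and $H^{1-c}(F,\sF_{-c})$ vanish, forcing $\sF(X)\iso\sF(U)$, while for $c=1$ the initial segment of the sequence is $0\to\sF(X)\to\sF(U)\to\sF_{-1}(F)$. The two points I expect to require genuine care are the internal-Hom computation $\uHom(\Z(c)[c],\sF)\simeq\sF_{-c}$ — reducing, as indicated, to the cancellation theorem and the $t$-exactness of contraction — and the verification that the connecting morphism of the Gysin triangle coincides with the geometric boundary $\partial$; the latter is the naturality of the Gysin triangle and is standard, though it should be made explicit.
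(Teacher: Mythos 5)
Your proposal is correct and matches the paper's argument: the paper defines $M_F(X)=\operatorname{cone}(M(U)\to M(X))$, invokes the Gysin isomorphism $M_F(X)\simeq M(F)\otimes\G_m^{\otimes c}[c]$ from \cite[Th.~15.15]{mvw}, and then identifies $H^i_F(X,\sF)\simeq\DM^\eff(M(F),\sF_{-c}[i-c])\simeq H^{i-c}(F,\sF_{-c})$ by the very tensor/internal-Hom adjunction you use. Your extra care about $\uHom(\Z(c)[c],\sF)$ lying in the heart (via $t$-exactness of $(-)_{-1}$, cf.\ Proposition~\ref{p3}) and about the boundary-map identification only makes explicit what the paper leaves implicit, so the approach is the same.
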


\begin{proof} Let $M_F(X)=cone(M(U)\to M(X))$. We have a Gysin isomorphism $M_F(X)\simeq M(F)(c)[2c]\simeq M(F)\otimes \G_m^{\otimes c}[c]$ \cite[Th. 15.15]{mvw}, hence
\begin{multline*}
H^i_F(X,\sF)\simeq \DM^\eff(M_F(X),\sF[i])\simeq \DM^\eff(M(F)\otimes \G_m^{\otimes c}[c],\sF[i])\\
\simeq \DM^\eff(M(F),\sF_{-c}[i-c])\simeq H^{i-c}(F,\sF_{-c}).
\end{multline*}
\end{proof}

\begin{prop}\label{p6} Let $p:Y\to X$ be an alteration of smooth $k$-schemes. Then, for any $\sF\in \HI$, there exists a map $p_*:\sF(Y)\to \sF(X)$, natural in $\sF$, such that the composition $\sF(X)\by{p^*} \sF(Y)\by{p_*} \sF(X)$ is multiplication by the generic degree $\delta$.
\end{prop}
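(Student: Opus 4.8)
The plan is to use the fact that an alteration $p:Y\to X$ between smooth $k$-schemes is generically finite of degree $\delta$, so it induces a correspondence in the other direction. First I would recall that, by definition, $p$ is a proper, dominant morphism which is finite flat over a dense open subscheme $X_0\subseteq X$; let $Y_0=p^{-1}(X_0)$. Over $X_0$, the graph of $p$ (viewed backwards) is a finite correspondence $\Gamma_0\in \Cor(X_0,Y_0)$ of degree $\delta$, since $Y_0\to X_0$ is finite flat of that degree. The key point is to extend this to a correspondence $\gamma\in \Cor(X,Y)$, i.e.\ to show that the closure $\bar\Gamma\subseteq X\times Y$ of $\Gamma_0$ is finite and surjective over (a component of) $X$ and hence defines a finite correspondence; for this I would invoke properness of $p$, which forces $\bar\Gamma\to X$ to be proper with finite generic fibre, and normality (smoothness) of $X$ together with the platification/Zariski's main theorem argument to get finiteness — alternatively, one can cite the standard construction of the transfer for a finite surjective morphism from a normal to a smooth scheme.

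Granting the correspondence $\gamma\in \Cor(X,Y)$, it induces $M(\gamma):M(X)\to M(Y)$ in $\DM^\eff(k)$, hence by contravariant functoriality of $H^0(-,\sF)=\DM^\eff(M(-),\sF)$ a map $p_*:=\gamma^*:\sF(Y)\to \sF(X)$, manifestly natural in $\sF$. It remains to identify the composite $p_*p^*$ with multiplication by $\delta$. The morphism $p$ itself corresponds to the transpose correspondence ${}^t\gamma_p\in\Cor(Y,X)$ given by the graph of $p$, and $p^*$ is induced by $M({}^t\gamma_p):M(Y)\to M(X)$; wait — more precisely $p^*$ is induced by $M(p):M(Y)\to M(X)$. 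So $p_*p^*$ is induced by the composite correspondence $\gamma\circ[\Gamma_p]\in\Cor(X,X)$, where $[\Gamma_p]$ is the graph of $p$. The claim is that this composite equals $\delta\cdot[\Delta_X]$, the diagonal.

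To verify $\gamma\circ[\Gamma_p]=\delta\,[\Delta_X]$ in $\Cor(X,X)$, I would argue that both sides are cycles on $X\times X$ supported on $\Delta_X$ (since set-theoretically the composite is carried by points $(x,x)$), hence are of the form $m[\Delta_X]$ for some $m\in\Z$, and it suffices to compute $m$ generically: restricting to $X_0$, where $p$ is finite flat of degree $\delta$, the composite $\gamma_0\circ[\Gamma_{p_0}]$ is the pushforward along $Y_0\times X_0 \to X_0\times X_0$ of the relevant intersection, which by the projection formula / the definition of the transfer for a finite flat morphism equals $\delta[\Delta_{X_0}]$. Since $\Cor(X,X)\to\Cor(X_0,X_0)$ restricted to cycles supported on the diagonal is injective (a cycle on $X\times X$ supported on $\Delta_X\cong X$ with $X$ integral is determined by a coefficient, read off over any dense open), we conclude $m=\delta$. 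I expect the main obstacle to be the clean construction of $\gamma\in\Cor(X,Y)$ — i.e.\ checking that the closure of the graph is an admissible finite correspondence — which is where properness of the alteration and smoothness (normality) of $X$ are essential; the degree computation is then formal. If $X$ is not connected one does this componentwise, with $\delta$ the function that is the generic degree on each component; but since the statement speaks of "the generic degree $\delta$" I would assume $X$ (hence $Y$, up to components) connected.
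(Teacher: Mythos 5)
Your approach has a genuine gap at the step where you claim the closure $\bar\Gamma\subseteq X\times Y$ of $\Gamma_0$ is finite over $X$. Since $\Gamma_0$ is the transpose graph of $p|_{Y_0}$ and $Y_0$ is dense in $Y$, the closure $\bar\Gamma$ is the full transpose graph $\{(p(y),y):y\in Y\}$, isomorphic to $Y$, and the projection $\bar\Gamma\to X$ is identified with $p$ itself. An alteration is only \emph{generically} finite: it can (and typically does) have positive-dimensional fibres over a nonempty closed subset of $X$, so $\bar\Gamma\to X$ is not quasi-finite and hence not finite. The combination ``proper $+$ finite generic fibre $+$ smooth target $+$ ZMT'' does not force finiteness — ZMT converts proper and \emph{quasi-finite} into finite, and quasi-finiteness is exactly what fails here (the blow-up of $\A^2$ at a point is already a counterexample to your heuristic). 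Consequently $\bar\Gamma$ does not define an element of $\Cor(X,Y)$, and the rest of your argument cannot get started.

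What is missing is the ingredient the paper uses to circumvent this: for $\sF\in\HI$, the restriction $\sF(X)\to\sF(X\setminus F')$ is an \emph{isomorphism} whenever $F'$ has codimension $\ge 2$ (this is the $c>1$ case of Lemma~\ref{l3}). The paper takes the Stein factorisation $Y\xrightarrow{q}Z\xrightarrow{r}X$ with $Z$ normal and $r$ finite; by Zariski's Main Theorem applied to the proper birational $q$ over a normal base, $q$ is an isomorphism outside a closed $F\subset Z$ of codimension $\ge 2$. Throwing away $F'=r(F)$ (still of codimension $\ge 2$ in $X$), along with $G=q^{-1}r^{-1}(F')$ in $Y$, makes $p$ restrict to a \emph{finite} morphism $p':Y\setminus G\to X\setminus F'$, for which the transfer on $\sF$ is available; $p_*$ is then $\sF(Y)\to\sF(Y\setminus G)\xrightarrow{p'_*}\sF(X\setminus F')\cong\sF(X)$. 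Your closure-of-graph idea would then work on $X\setminus F'$ and would essentially reproduce $p'_*$. Your degree computation (restrict to a dense open where $p$ is finite flat, and observe that a cycle supported on the diagonal is determined by its generic multiplicity) is fine as far as it goes, but it needs the corrected construction of $p_*$ to apply to; the paper instead reads off $p_*p^*=\delta$ directly from the commutative square relating $\sF(X)$, $\sF(X\setminus F')$, $\sF(Y)$, $\sF(Y\setminus G)$.
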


\begin{proof} If $p$ is finite, this follows from the transfer structure on $\sF$. In general, let $Y\by{q}Z\by{r} X$ be the Stein factorisation of $p$. Considering the normalisation of $Z$, we see that $Z$ is normal. Therefore, by the valuative criterion of properness there exists a closed subset $F\subset Z$ of codimension $\ge 2$ such that $q$ is an isomorphism above $Z-F$. Then $F'=r(F)$ is of codimension $\ge 2$ in $X$, and $F''=r^{-1}(F')$ is still of codimension $\ge 2$ in $Z$. Let $G=q^{-1}(F'')$; then $q_{|Y-G}:Y-G\to Z-F''$ is an isomorphism, hence $p':=p_{|Y-G}:Y-G\to X-F'$ is finite. We define $p_*$ as the composition
\[\sF(Y)\to \sF(Y-G)\by{p'_*} \sF(X-F')\iso \sF(X)\]
where the last map is the inverse of the isomorphism of Lemma \ref{l3}. Using the commutative diagram
\begin{equation}\label{eq6}
\begin{CD}
\sF(Y)@>>>\sF(Y-G)\\
@A p^* AA @A{p'}^*AA\\
\sF(X)@>\sim>>\sF(X-F')
\end{CD}
\end{equation}
we see that $p_*p^*$ is multiplication by $\delta$.
\end{proof}

\begin{rk} If $p$ is birational, ${p'}^*$ is an isomorphism in \eqref{eq6}. Since its top map is injective (Lemma \ref{l3}), $p^*$ is an isomorphism. 
\end{rk}

The following lemma will not be used in the sequel, but seems worth noting. It generalises \cite[Lemma 2.3.2]{birat-tri}, which is its special case $n=1$.

\begin{lemma}\label{l4} Let $\sF\in \HI_{< n}$. Then\\
a) If $Z\subset X$ is a closed pair of smooth varieties, with $Z$ of codimension $\ge n$, then $H^*(X,\sF)\iso H^*(X-Z,\sF)$.\\
b) $H^i(X,\sF)=0$ for $i\ge n$ and any smooth $X$.
\end{lemma}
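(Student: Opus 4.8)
The plan is to prove (a) and (b) together by a simultaneous induction on $n$, using the contraction functor $(-)_{-1}$ to descend from $\HI_{<n}$ to $\HI_{<n-1}$. The base case $n=1$ is exactly \cite[Lemma 2.3.2]{birat-tri}: a birational sheaf $\sF$ satisfies $\sF_{-1}=0$, so Lemma \ref{l3} with $c\ge 1$ gives $H^*(X,\sF)\iso H^*(X-Z,\sF)$ for $Z$ of codimension $\ge 1$, and then $H^i(X,\sF)=0$ for $i\ge 1$ follows by choosing $Z$ so that $X-Z$ is affine (or a disjoint union of open subsets of affine space) where one invokes the Gersten-type resolution / vanishing of cohomology of homotopy invariant sheaves above the dimension of an affine scheme.

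For the inductive step, suppose (a) and (b) hold for $n-1$ and let $\sF\in\HI_{<n}$, so that by definition $\sF_{-n}=0$, equivalently $(\sF_{-1})_{-(n-1)}=0$, i.e. $\sF_{-1}\in\HI_{<n-1}$. To prove (a): given a smooth closed pair $Z\subset X$ with $\operatorname{codim}Z=c\ge n$, Lemma \ref{l3} yields the long exact sequence with terms $H^{i-c}(Z,\sF_{-c})$. Since $c\ge n\ge 1$ and $\sF_{-c}=(\sF_{-1})_{-(c-1)}$, and $\sF_{-1}\in\HI_{<n-1}$ forces $(\sF_{-1})_{-(c-1)}=0$ as soon as $c-1\ge n-1$, i.e. $c\ge n$, all these relative terms vanish and we get $H^*(X,\sF)\iso H^*(X-Z,\sF)$. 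For a general (not necessarily smooth) closed pair one first stratifies $Z$ by smooth locally closed pieces and applies this repeatedly, handling the strata in increasing order of dimension so that at each stage the complement of the already-removed part inside a smooth ambient open is again a smooth closed pair of codimension $\ge n$; alternatively one reduces to the smooth case by a standard limit/induction argument, but if the statement is only claimed for smooth closed pairs this is unnecessary.

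To prove (b) from (a): let $X$ be smooth of dimension $d$ and $i\ge n$. Using (a) repeatedly, remove a closed subset of codimension $\ge n$ to arrange that $X$ is affine and admits enough regular functions; more precisely, by Noether normalisation and generic smoothness one can find an open $U\subseteq X$ with $X-U$ of codimension $\ge n$ such that $U$ is étale over $\A^d$, or even reduce to computing cohomology of a homotopy invariant Nisnevich sheaf with transfers on an affine scheme of dimension $d$. Then $H^i(U,\sF)=0$ for $i>d$ automatically; the point is to push the vanishing down to $i\ge n$, which one does by a secondary induction removing a codimension-$1$ closed smooth subvariety $W\subset U$ and using the part of Lemma \ref{l3} in codimension $c=1$: this relates $H^i(U,\sF)$ to $H^i(U-W,\sF)$ and $H^{i-1}(W,\sF_{-1})$; since $\sF_{-1}\in\HI_{<n-1}$, the inductive hypothesis (b) for $n-1$ gives $H^{i-1}(W,\sF_{-1})=0$ for $i-1\ge n-1$, i.e. $i\ge n$, so that $H^i(U,\sF)\hookrightarrow H^i(U-W,\sF)$ and one concludes by Artin-type induction on $d$ (shrinking $X$ to affine space of lower dimension), the case $d<n$ being vacuous since then already (a) lets us contract $X$ to a point. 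The main obstacle is organising this last bootstrap cleanly: one must be careful that the closed subsets removed to apply (a) and the divisor $W$ removed to apply Lemma \ref{l3} in codimension one are chosen compatibly, and that the induction on dimension terminates; this is exactly the mechanism of \cite[Lemma 2.3.2]{birat-tri} generalised one contraction at a time, so no genuinely new input is needed beyond the $t$-exactness of $(-)_{-1}$ already recorded in Proposition \ref{p3}.
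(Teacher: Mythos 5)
Your proof of part (a) is correct and coincides with the paper's: for $c\ge n$ one has $\sF_{-c}=0$ directly from $\sF_{-n}=0$, so the relative terms in Lemma \ref{l3} vanish (no induction is actually needed here). The inductive step you describe for the middle of (b) is also on the right track: for $\sF\in\HI_{<n}$, $c\ge 1$ and $i\ge n$, the inductive hypothesis applied to $\sF_{-c}\in\HI_{<n-c}$ kills $H^{i-c}(W,\sF_{-c})$ and $H^{i+1-c}(W,\sF_{-c})$, so $H^i(U,\sF)\iso H^i(U-W,\sF)$ for any smooth closed pair of any codimension, and the successive-singular-loci stratification then gives $H^i(X,\sF)\iso H^i(U,\sF)$ for \emph{every} open immersion $U\hookrightarrow X$. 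Up to this point you and the paper agree.

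The gap is in how you try to turn this invariance under restriction into actual vanishing. Two concrete problems. First, removing a closed subset of codimension $\ge n$ will not make a smooth variety affine, let alone \'etale over $\A^d$, once $n\ge 2$; such a complement is affine only in degenerate situations, so the reduction ``arrange that $X$ is affine and admits enough regular functions'' is unjustified for $n\ge 2$. Second, and more seriously, the ``Artin-type induction on $d$'' never closes: removing a smooth divisor $W\subset U$ gives $H^i(U,\sF)\iso H^i(U-W,\sF)$, but $U-W$ still has dimension $d$, so you have no descent in $d$, and nothing in your argument ever produces a space on which $H^i(-,\sF)$ is visibly $0$ for $n\le i\le d$. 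What actually finishes the proof in the paper is the effaceability of $H^i(-,\sF)$ for $i>0$: since any positive-degree cohomology class of a Nisnevich sheaf dies on some Zariski open, and you have just shown $H^i(X,\sF)\iso H^i(U,\sF)$ for every open $U$ when $i\ge n$, the class must already be zero. This is the same mechanism as in \cite[Lemma 2.3.2]{birat-tri}, which you cite as your base case; you should invoke it explicitly at the end rather than attempting to compute on an explicit model.
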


\begin{proof} a) follows from Lemma \ref{l3} and the definition of $\HI_{<n}$. For b), by induction on $n$ the first and last group in the exact sequence of this lemma are $0$ for $i-c\ge n-c$, hence for $i\ge n$.  By a standard argument of successive singular loci, this implies that $H^i(X,\sF)\iso H^i(U,\sF)$ when $i\ge n$ for any open immersion $U\inj X$; but the functor $X\mapsto H^i(X,\sF)$ is effaceable for $i>0$ in the sense that every cohomology class vanishes locally for the Zariski topology, hence the conclusion.
\end{proof}

\begin{rk} The above results can be deduced more elementarily from Voevodsky's Gersten resolution \cite[Th. 4.37]{voepre}.
\end{rk}

\begin{thm}\label{t3} Let $n>0$. For any smooth projective variety $X$, the counit map of the adjunction of Proposition \ref{p3}
\[\iota_n\nu_{<n} M(X)\to M(X)\]
becomes an isomorphism after applying the truncation functor $\tau_{\le -n}$ (cohomological notation).
\end{thm}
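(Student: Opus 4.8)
The plan is to compute both sides of the counit map $\iota_n\nu_{<n}M(X)\to M(X)$ in low degrees using the exact triangle of Proposition \ref{p2}, and to exploit the $t$-exactness of the relevant functors together with the geometric input that $X$ is smooth projective. Applying $\tau_{\le -n}$ to the triangle
\[\nu^{\ge n}M(X)\to M(X)\to \iota_n\nu_{<n}M(X)\by{+1},\]
it suffices to show that $\tau_{\le -n}\nu^{\ge n}M(X)=0$, i.e. that $H^i(\nu^{\ge n}M(X))=0$ for all $i\le -n$ (cohomological notation for the homotopy $t$-structure on $\DM^\eff$). Since $\nu^{\ge n}M(X)=\uHom(\Z(n),M(X))(n)$ and twisting by $(n)$ is $t$-exact (it is the right adjoint to the $n$-fold contraction, which is $t$-exact by \cite[Th. 5.2]{deglise}), this is equivalent to showing that the object $N:=\uHom(\Z(n),M(X))$ satisfies $H^i(N)=0$ for $i\le -n$, equivalently $H^i(M(X))_{-n}=0$ for $i\le -n$ — because $(-)_{-n}$, being a composition of the $t$-exact contraction functors, commutes with taking homology sheaves.

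So the heart of the matter is to show: for $X$ smooth projective and $n>0$, the contracted homology sheaves $\big(H^{-i}(M(X))\big)_{-n}$ vanish for all $i\ge n$, where $H^{-i}(M(X))\in\HI$ is the $i$-th Suslin homology sheaf (in homological indexing, $\underline{H}_i^{\Sing}(X)$). First I would recall that $\underline{H}_i^{\Sing}(X)=0$ for $i<0$ and that these sheaves are supported in a bounded range. The key computation is the contraction: for a homotopy invariant sheaf with transfers, $\sF_{-n}(U)=\sF(U\times\G_m^{\wedge n})$ split off appropriately, and by the cancellation theorem \cite{voecan} one has $\big(H^{-i}(M(X))\big)_{-n}(U)$ expressed in terms of $\DM^\eff(M(U)\otimes\G_m^{\otimes n}[\,\cdot\,],M(X))$, i.e. in terms of $\DM^\eff$-morphisms $M(U)(n)[j]\to M(X)$. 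For $X$ projective of dimension $d$, Poincaré duality gives $M(X)\simeq M(X)^\vee(d)[2d]$, so such a Hom group becomes motivic cohomology / higher Chow groups of $X\times U$ in weight $d-n$ and the relevant degree; the projectivity (properness) is what makes the duality available and forces the weight/degree constraints that kill the group when $i\ge n$.

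The cleanest route, though, is probably to avoid duality and instead argue directly with the Gersten/coniveau filtration: by Voevodsky's Gersten resolution (see the remark after Lemma \ref{l4}, \cite[Th. 4.37]{voepre}), a homotopy invariant Nisnevich sheaf with transfers $\sF$ of the form $H^{-i}(M(X))$ has, over a field $E/k$, stalks built from the generic points of $X_E$, and its $n$-fold contraction $\sF_{-n}$ over $E$ is governed by the $(-n)$-twisted theory evaluated at the function fields of $X$. Combining this with the computation of the motivic cohomology of function fields of projective varieties in the appropriate range — specifically that $\Hom_{\DM^\eff}(M(\Spec E)(n)[j], M(X))$ vanishes in the relevant range of $(i,n)$ forced by $i\ge n$, using the vanishing $H^p_{\mathrm{mot}}(E,\Z(q))=0$ for $p>q$ and the projective dimension count — gives the desired vanishing stalkwise, hence as sheaves.

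The main obstacle I expect is pinning down precisely the degree/weight bookkeeping in the Hom-group computation and justifying that projectivity (as opposed to mere smoothness) is exactly what is needed: one must see that the "extra" Tate twists contributed by $\nu^{\ge n}$ push the relevant motivic cohomology of $X$ into the forbidden range $p>q$, and this uses properness of $X$ in an essential way (via duality or via the finiteness of the coniveau filtration for a projective variety). A secondary technical point is the compatibility of the truncation $\tau_{\le-n}$ with the twist $(n)$ and with $\uHom(\Z(n),-)$, which should follow formally from the $t$-exactness assertions in Proposition \ref{p3} and \cite[Th. 5.2]{deglise}, but needs to be stated carefully since $\uHom(\Z(n),-)$ is only left $t$-exact in general — here it is the composite $(-)_{-n}$ on homology sheaves that is exact, which is what we actually use.
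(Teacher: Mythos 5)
There is a fatal direction error in your reduction. From the triangle $\nu^{\ge n}M(X)\to M(X)\to\iota_n\nu_{<n}M(X)\xrightarrow{+1}$, the paper's proof reduces the theorem to the vanishing $\tau_{>-n}\,\nu^{\ge n}M(X)=0$, i.e. $H^j(\nu^{\ge n}M(X))=0$ for $j>-n$, equivalently $\underline{h}_i(M(X))_{-n}=0$ for $0\le i<n$. You instead assert that it suffices to show $\tau_{\le -n}\,\nu^{\ge n}M(X)=0$, and after unwinding arrive at ``$(H^{-i}(M(X)))_{-n}=0$ for all $i\ge n$.'' That statement is \emph{false}: take $X=\P^n$, where $M(\P^n)\simeq\bigoplus_{j=0}^n\Z(j)[2j]$ gives $\underline{h}_n(M(\P^n))=\G_m^{\wedge n}$ and hence $\underline{h}_n(M(\P^n))_{-n}\simeq\Z\neq 0$. (By contrast $\underline{h}_i(M(\P^n))_{-n}=0$ for $i<n$, which is the vanishing that is actually needed and actually true.) The confusion is understandable because of the way the truncation is written in the theorem's statement, but one can sanity-check the intended direction from the proof of Corollary \ref{c2}, which uses the theorem to identify $\Hom(M(X),\sF[i])$ for $\sF$ in the heart and $i<n$; such Hom groups depend on $\tau_{\ge -i}M(X)$ with $-i\ge -n+1$, i.e. on $\tau_{>-n}M(X)$, not $\tau_{\le -n}M(X)$.

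Two further points. First, the claim that twisting by $(n)$ is $t$-exact because ``it is the right adjoint to the $n$-fold contraction'' is incorrect on both counts: $\otimes\,\G_m^{\otimes n}$ is the \emph{left} adjoint of $(-)_{-n}$, and it is only \emph{right} $t$-exact, not $t$-exact. You flag this yourself in the last paragraph, but it is precisely this one-sided exactness that the paper uses, and using it in the other direction (as your ``equivalently'' does) is not valid; there is also an unaccounted shift $[n]$ between $\uHom(\Z(n),M(X))$ and $M(X)_{-n}=\uHom(\Z(n)[n],M(X))$. Second, the substantive input of the paper's proof is not a fresh duality or Gersten argument: after the reduction, the paper simply quotes $\tau_{>-n}\uHom(\Z(n)[n],M(X))=0$ from \cite[Prop.~2.3]{rqnr}. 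Your sketch via Poincaré duality (or via the coniveau/Gersten filtration) is in the right spirit and is essentially what that cited proposition does, but as written it targets the wrong range; if you repair the inequality to $i<n$ and track the shift $[n]$, the duality route should indeed recover the needed vanishing and could serve as a self-contained alternative to the citation.
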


\begin{proof} It suffices to show that $\tau_{> -n} \nu^{\ge n} M(X)=0$. Writing
\[ \nu^{\ge n} M(X) = \uHom(\Z(n)[n],M(X))(n)[n]= \uHom(\Z(n)[n],M(X))\otimes \G_m^{\otimes n}\]
and noting that tensor product is right $t$-exact in $\DM^\eff$, it suffices to show that $\tau_{> -n} \uHom(\Z(n)[n],M(X))=0$. This was proven in \cite[Prop. 2.3]{rqnr}. %By Poincaré duality, we have
%\[\uHom(\Z(n)[n],M(X))=\uHom(M(X),\Z(d-n))[2d-n] \]
%where $d=\dim X$. By Voevodsky's Gersten conjecture, it suffices to evaluate the cohomology of this complex on function fields, hence on $\Spec k$. But
%\[H^i(\Spec k, \uHom(M(X),\Z(d-n))[2d-n] )= H^{2d-n+i}(X,\Z(d-n))\]
%which is $0$ when $2d-n+i>2(d-n)$ (Gersten's conjecture again), i.e. when $i>- n$.
\end{proof}

\begin{cor}\label{c2} Let $X$ be a smooth projective variety. Then, for any  $\sF\in \HI$, the counit map
\[H^i(X,\iota_n R_{<n} \sF)\to H^i(X,\sF)\]
is an isomorphism for $i<n$. If $\sF\in \HI_\nrd$, both sides are $0$ for $i=0$.
\end{cor}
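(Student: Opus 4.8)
The plan is to deduce the statement from Theorem \ref{t3} by dualising, i.e. by applying $\Hom_{\DM^\eff}(-,\sF[i])$ to the counit map $\iota_n\nu_{<n}M(X)\to M(X)$. First I would recall that for any $\sG\in\DM^\eff$ and any $i\in\Z$ one has $\Hom_{\DM^\eff}(M(X),\sG[i]) = H^i(X,\sG)$ when $\sG=\sF[0]$ lies in the heart $\HI$ (by \cite[Lect. 13]{mvw}), and more generally a hypercohomology spectral sequence relating $\Hom(M(X),\sG[i])$ to $H^p(X,\sH^q\sG)$. The key identification is that, since $\iota_n$ is fully faithful and is right adjoint to $\nu_{<n}$, one has
\[
\Hom_{\DM^\eff}(M(X),\iota_n C[i]) \simeq \Hom_{\DM_{<n}^\eff}(\nu_{<n}M(X),C[i]) \simeq \Hom_{\DM^\eff}(M(X),\iota_n C[i])
\]
and, taking $C = R_{<n}\sF[0]$, one can further use the adjunction $(\iota_n,R_{<n})$ of Proposition \ref{p2} to rewrite $\Hom(M(X),\iota_n R_{<n}\sF[i])$; but the cleanest route is simply to observe that $\iota_n R_{<n}\sF[0] \simeq (\iota_n\nu_{<n}M(X))$-component of a functorial triangle is not needed — rather, apply $\Hom(-,\sF[i])$ directly. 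Concretely, from Proposition \ref{p2} there is an exact triangle $\nu^{\ge n}M(X)\to M(X)\by{\epsilon}\iota_n\nu_{<n}M(X)\by{+1}$, and applying $\Hom_{\DM^\eff}(-,\sF[i])$ gives a long exact sequence
\[
\cdots\to \Hom(\iota_n\nu_{<n}M(X),\sF[i]) \by{\epsilon^*} H^i(X,\sF) \to \Hom(\nu^{\ge n}M(X),\sF[i]) \to \cdots
\]
and I must identify $\Hom(\iota_n\nu_{<n}M(X),\sF[i])$ with $H^i(X,\iota_nR_{<n}\sF)$ via the $(\iota_n,R_{<n})$ adjunction: indeed $\Hom_{\DM^\eff}(\iota_n\nu_{<n}M(X),\sF[i]) = \Hom_{\DM^\eff}(\nu_{<n}M(X),(R_{<n}\sF)[i]) = \Hom_{\DM^\eff}(M(X),\iota_nR_{<n}\sF[i])$, and since $\iota_nR_{<n}\sF$ is concentrated in the heart (Proposition \ref{p3}), this last group is $H^i(X,\iota_nR_{<n}\sF)$, at least for $i$ small; in general one needs $\iota_n R_{<n}\sF$ to be cohomologically concentrated in degree $0$, which holds because $R_{<n}$ is $t$-exact (the proof of Proposition \ref{p3}) and $\sF$ is in the heart. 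So the counit map in the statement is exactly $\epsilon^*$.

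It then remains to show $\Hom_{\DM^\eff}(\nu^{\ge n}M(X),\sF[i]) = 0$ for $i<n$. By Theorem \ref{t3} we have $\tau_{\le -n}\bigl(\iota_n\nu_{<n}M(X)\to M(X)\bigr)$ an isomorphism, equivalently $\tau_{>-n}\nu^{\ge n}M(X) = 0$, i.e. $\nu^{\ge n}M(X)$ lives in cohomological degrees $\le -n$ (homological degrees $\ge n$). Writing $N = \nu^{\ge n}M(X)$ with $\sH^q N = 0$ for $q > -n$, the hypercohomology spectral sequence $E_2^{p,q} = H^p(X,\sH^qN) \Rightarrow \Hom(N,\sF[p+q])$... wait, the direction is dual: I would instead use that $\Hom_{\DM^\eff}(N,\sF[i])$ is computed by a spectral sequence whose $E_2$ term involves $\Ext$ or $\Hom$ of $\sH^{-q}N$ against $\sF$ in degree related to $i$; since $N$ is concentrated in homological degrees $\ge n$ and $\sF[0]$ is in the heart, every contributing term forces the total degree $i\ge n$. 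Thus $\Hom(N,\sF[i]) = 0$ for $i < n$, giving the desired isomorphism $H^i(X,\iota_nR_{<n}\sF)\iso H^i(X,\sF)$ for $i<n$.

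For the last sentence, suppose $\sF\in\HI_\nrd$, i.e. $\sF$ is $n$-reduced. By Proposition \ref{p4} a), $R_{<n}\sF = \sF_{<n}$ is the largest subsheaf of $\sF$ of coniveau $<n$; but $n$-reducedness means precisely that $\sF$ has no nonzero subsheaf of coniveau $<n$, so $\sF_{<n} = 0$, hence $\iota_nR_{<n}\sF = 0$ and $H^0(X,\iota_nR_{<n}\sF) = 0$. By the isomorphism just established (with $i=0<n$, using $n>0$), we conclude $H^0(X,\sF) = 0$ as well.

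\textbf{Main obstacle.} The delicate point is the bookkeeping in the second paragraph: rigorously identifying $\Hom_{\DM^\eff}(\iota_n\nu_{<n}M(X),\sF[i])$ with $H^i(X,\iota_nR_{<n}\sF)$ requires knowing that $\iota_nR_{<n}\sF$ has no cohomology outside degree $0$ (so that the hypercohomology spectral sequence degenerates to ordinary sheaf cohomology in the relevant range), which rests on the $t$-exactness built into Proposition \ref{p3}, and care is needed because $\iota_n$ is only exact for the induced $t$-structure, not a priori for the homotopy $t$-structure on $\DM^\eff$ — one must check that $\iota_n$ of a heart object stays in the heart, i.e. that $\iota_n$ is $t$-exact on the heart, which follows from Proposition \ref{p3} together with the fact that $\iota_n$ is right $t$-exact (right adjoint to the $t$-exact $\nu_{<n}$) and left $t$-exact is what Proposition \ref{p3} provides. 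Getting these exactness statements lined up correctly is where the real work is; the vanishing of $\Hom(\nu^{\ge n}M(X),\sF[i])$ for $i<n$ is then a formal consequence of the connectivity bound from Theorem \ref{t3}.
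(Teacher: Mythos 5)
Your overall strategy is the same as the paper's: unfold Theorem \ref{t3} via the triangle of Proposition \ref{p2}, use the double adjunction $(\nu_{<n},\iota_n)$ and $(\iota_n,R_{<n})$ to identify $\Hom(\iota_n\nu_{<n}M(X),\sF[i])$ with $H^i(X,\iota_n R_{<n}\sF)$, and kill the flanking terms $\Hom(\nu^{\ge n}M(X),\sF[j])$ ($j=i, i-1$) by the connectivity bound $\nu^{\ge n}M(X)\in D^{\le -n}$ together with $\sF[j]\in D^{\ge -j}$. That part is correct, if somewhat circuitous in the write-up.

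There is, however, a real error in the last paragraph. You write that $R_{<n}\sF=\sF_{<n}$, so that $n$-reducedness of $\sF$ gives $\iota_n R_{<n}\sF=0$. This conflates the object $R_{<n}\sF\in\DM_{<n}^\eff$ with its $0$-th cohomology: Proposition \ref{p4} a) identifies only $R^0_{<n}\sF$ with $\sF_{<n}$ (this is exactly what the paper's proof says), and nothing asserts that $R_{<n}$ is $t$-exact on heart objects — a priori $R_{<n}\sF$ has higher cohomology, so it need not vanish when $\sF_{<n}=0$. To salvage the conclusion one should argue instead: $R_{<n}$ is right adjoint to the $t$-exact $\iota_n$, hence left $t$-exact, so $R_{<n}\sF\in D^{\ge 0}$ and consequently $H^0(X,\iota_n R_{<n}\sF)=H^0(X,\iota_n R^0_{<n}\sF)=H^0(X,\sF_{<n})=0$; alternatively, simply invoke the isomorphism from the first part and compute $H^0(X,\iota_n R_{<n}\sF)=\Hom(M(X),\iota_n R_{<n}\sF)$, which vanishes because $M(X)\in D^{\le 0}$ and, as just noted, $R^0_{<n}\sF=0$ forces $\iota_n R_{<n}\sF\in D^{\ge 1}$.
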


\begin{proof} The first point follows directly from Theorem \ref{t3}. The second follows from the first, since $R^0_{<n} \sF=\sF_{<n}$ for any $\sF$. 
\end{proof}

Let $\sF$ be any abelian sheaf on the big Zariski site on smooth $k$-varieties. For $(\bar X,Z,X)$ as in the beginning of \S \ref{s1}, we have a spectral sequence similar to \eqref{eq1}:
\begin{equation}\label{eq2}
E_1^{p,q} = \bigoplus_{|J|=d-p} H^q_{Z_J}(\bar X,\sF)\Rightarrow H^{p+q-d}(X,\sF).
\end{equation}

If $\sF=\sF_n$ is part of a homotopic module, Lemma \ref{l3} yields this time
\begin{equation}\label{eq4}
H^q_{Z_J}(\bar X,\sF_n)\simeq H^{p+q-d}(Z_J,\sF_{n+p-d}).
\end{equation}

\begin{prop}\label{t2}  Suppose that $(\sF_n)\in \HI_*^\rd$ (see Definition \ref{d4}). Then, for $p+q=d$,  we have $E_1^{p,q}=0$ except for $p=d-n$, hence an exact sequence
\[0\to \sF_n(X)\to \bigoplus_{|J|=n} \sF_0(Z_J)\by{i_n} \bigoplus_{|J|=n-1} H^1(Z_J,\sF_1) \]
where $i_n$ is induced by the boundary maps $\partial$ of Lemma \ref{l3}.
\end{prop}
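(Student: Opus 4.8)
The plan is to feed the homotopic module $(\sF_*)$ into the spectral sequence \eqref{eq2} with $\sF=\sF_n$ and read off the claimed four-term exact sequence from the behaviour of that spectral sequence on the antidiagonal $p+q=d$, which abuts to $H^0(X,\sF_n)=\sF_n(X)$.

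Using \eqref{eq4} together with the structure isomorphisms $(\sF_n)_{-c}\iso\sF_{n-c}$, I would first rewrite $E_1^{p,q}=\bigoplus_{|J|=d-p}H^{p+q-d}(Z_J,\sF_{n+p-d})$, each $Z_J$ being smooth projective. On the antidiagonal $p+q=d$ this is $\bigoplus_{|J|=d-p}\sF_{n+p-d}(Z_J)$. It vanishes for $p<d-n$ since then $\sF_{n+p-d}=0$ by $(-1)$-connectedness, and it vanishes for $p>d-n$ since then $m:=n+p-d\ge 1$, so $\sF_m$ is reduced, hence $m$-reduced by Lemma \ref{l2}, whence $\sF_m(Z_J)=H^0(Z_J,\sF_m)=0$ by Corollary \ref{c2}. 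Thus the only surviving antidiagonal term is $E_1^{d-n,n}=\bigoplus_{|J|=n}\sF_0(Z_J)$, and the $d_1$-differential out of it has target $E_1^{d-n+1,n}=\bigoplus_{|J|=n-1}H^1(Z_J,\sF_1)$ (same computation, now with $p+q-d=1$); unwinding the construction of \eqref{eq2}, this $d_1$ is the map $i_n$ assembled from the boundary maps $\partial$ of Lemma \ref{l3}.

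Next I would address convergence. No differential $d_r$ ($r\ge 2$) can enter $E_r^{d-n,n}$, because its source $E_1^{d-n-r,n+r-1}$ has first index $<d-n$, hence vanishes as above. Since $E_\infty^{p,d-p}=0$ for $p\ne d-n$, the abutment filtration on $\sF_n(X)$ has a single graded piece, so $\sF_n(X)\iso E_\infty^{d-n,n}\subseteq E_2^{d-n,n}=\ker i_n$; this already gives the injection. To get equality I must kill the higher differentials \emph{leaving} $(d-n,n)$: the differential $d_r$ ($2\le r\le n$) lands in $E_r^{d-n+r,n+1-r}$, a subquotient of $E_1^{d-n+r,n+1-r}=\bigoplus_{|J|=n-r}H^1(Z_J,\sF_r)$, so it suffices to prove $H^1(Z_J,\sF_r)=0$ for $r\ge 2$.

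This degree-one vanishing is the hard part. The point will be that $\sF_r$ is $r$-reduced (Lemma \ref{l2}), so $R^0_{<r}\sF_r=(\sF_r)_{<r}=0$; combined with the (formal) left $t$-exactness of $R_{<r}$, which follows from the $t$-exactness of $\iota_r$ built into Proposition \ref{p3}, this forces $\iota_rR_{<r}\sF_r$ to be concentrated in homotopy degrees $\ge 2$, hence $H^1(Z_J,\iota_rR_{<r}\sF_r)=0$ for the smooth projective $Z_J$; by the isomorphism of Corollary \ref{c2}, valid in degree $1<r$, we conclude $H^1(Z_J,\sF_r)=0$. In effect this upgrades Corollary \ref{c2} to the statement that an $r$-reduced sheaf has vanishing cohomology in all degrees $<r$ on a smooth projective variety, which is the one point where I expect to go slightly beyond the literal form of Corollary \ref{c2}. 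With this in hand all higher differentials out of $(d-n,n)$ vanish, so $E_\infty^{d-n,n}=\ker i_n$ and the exact sequence $0\to\sF_n(X)\to\bigoplus_{|J|=n}\sF_0(Z_J)\by{i_n}\bigoplus_{|J|=n-1}H^1(Z_J,\sF_1)$ follows.
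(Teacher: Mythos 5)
Your treatment of the first claim (antidiagonal vanishing of the $E_1$-terms away from $p=d-n$) is exactly the paper's argument, via Lemma~\ref{l2} and Corollary~\ref{c2}. You then rightly flag that the four-term exactness needs the differentials $d_r$ ($2\le r\le n$) leaving the spot $(d-n,n)$ to vanish, and you correctly reduce this to $H^1(Z_J,\sF_r)=0$ for $r\ge 2$ with $Z_J$ smooth projective --- a convergence point that the paper's one-sentence justification (``the second follows from the first'') passes over.

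The flaw is in the argument you give for that vanishing. Left $t$-exactness of $R_{<r}$ together with $R^0_{<r}\sF_r=(\sF_r)_{<r}=0$ only places $\iota_rR_{<r}\sF_r$ in cohomological degrees $\ge 1$, not $\ge 2$ as you claim; it therefore yields $H^1(Z_J,\iota_rR_{<r}\sF_r)\simeq H^0(Z_J,R^1_{<r}\sF_r)$ with no control on the right-hand side. Here $R^1_{<r}\sF_r$ is merely some object of $\HI_{<r}$, and an object of $\HI_{<1}\subset\HI_{<r}$ (a birational sheaf) can have plenty of nonzero sections on a smooth projective variety, so nothing you cite forces $H^0(Z_J,R^1_{<r}\sF_r)$ to vanish. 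Consequently the announced ``upgrade'' of Corollary~\ref{c2} --- that an $r$-reduced sheaf has $H^i=0$ for all $i<r$ on a smooth projective variety --- is not established by your reasoning, and as stated the intermediate step is incorrect. To close the gap you would need genuine information about $R^1_{<r}\sF_r$ when $\sF_r$ sits inside a reduced homotopic module, or a different mechanism (analogous to the two-row structure that makes Corollary~\ref{c1} work in the $l$-adic case) to kill $d_2,\dots,d_n$.
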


\begin{proof} The first claim follows from Lemma \ref{l2} and Corollary \ref{c2}, and the second follows from the first.
\end{proof}

\section{Back to Section \ref{s1}}

We now make the link with the situation in that section, so assume again $k$ finite. For any smooth $k$-scheme $X$, write
\[\sH_n(X)=H^n_\cont(X,\Z_l(n)).\]

\begin{prop}\label{p5} The presheaf $\sH_n$ has a transfer structure and is $\A^1$-inv\-ar\-iant; after tensoring with $\Q$, it becomes an étale sheaf, and the collection $(\sH_n\otimes \Q)_{n\in \Z}$ is an object of $\HI_*^\rd$.
%The étale presheaf $\sH_n\otimes \Q$ is a sheaf; it has a transfer structure, and the collection $(\sH_n)_{n\in \Z}$ is an object of $\HI_*^\rd$.
\end{prop}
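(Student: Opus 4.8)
The plan is to check the three assertions in turn, building on the structural results of Sections 2--3. First, the transfer structure and $\A^1$-invariance: the presheaf $\sH_n$ is, by definition, $X\mapsto H^n_\et(X,\Z_l(n)^c)$, the degree-$n$ continuous étale cohomology with coefficients in $\Z_l(n)^c=R\lim\mu_{l^\nu}^{\otimes n}$. A transfer structure comes from the trace maps in étale cohomology for finite flat morphisms (which pass to the limit over $\nu$ and are compatible with the cup-product and projection formula, so they assemble into a presheaf with transfers in the sense of \cite{mvw}); I would record that these transfers satisfy the relations needed to land in $\Cor$ by the usual reduction to the case of a field and the compatibility of étale trace maps with base change. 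For $\A^1$-invariance, after tensoring with $\Q$ this follows from \eqref{eq0}: since $H^i(\Q_l(n)^c)=0$ for $i<n$, the presheaf $X\mapsto H^n_\cont(X,\Q_l(n))$ is the bottom nonvanishing cohomology sheaf, hence equals $\sH^n(\Q_l(n)^c)$ computed Zariski- (in fact étale-) locally; and $\A^1$-invariance of this sheaf is the homotopy invariance of $\mu_{l^\nu}^{\otimes n}$-cohomology in the relevant range, which holds because $l$ is invertible in $k$ (Suslin rigidity / standard $\A^1$-invariance for torsion étale cohomology, passing to the limit and then tensoring with $\Q$ to kill the $\lim^1$).

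Next, that $\sH_n\otimes\Q$ is an étale sheaf: this is exactly the remark already made in the introduction just after \eqref{eq0} --- the vanishing $H^i(\Q_l(n)^c)=0$ for $i<n$ forces the presheaf $X\mapsto H^n_\cont(X,\Q_l(n))$ to coincide with the étale sheaf $\sH^n(\Q_l(n)^c)$, so it is a sheaf for the étale (a fortiori Nisnevich) topology. Combined with the previous paragraph, $(\sH_n\otimes\Q)$ is then a homotopy invariant Nisnevich sheaf with transfers, i.e.\ an object of $\HI$; the isomorphisms $\sH_n\otimes\Q\iso(\sH_{n+1}\otimes\Q)_{-1}$ making the collection a homotopic module should come from the compatibility of the computation with the contraction functor, i.e.\ from the fact that $(-)_{-1}$ on $\sH^{n+1}(\Q_l(n+1)^c)$ recovers $\sH^n(\Q_l(n)^c)$ --- concretely this is the statement that $\uHom(\G_m,\Q_l(n+1)^c)\simeq\Q_l(n)^c[-1]$ in the relevant range, which is a Tate-twist bookkeeping statement (a Gysin/localisation computation on $\G_m$, as in Lemma \ref{l3}); I would either cite the corresponding statement in \cite{tate} or derive it from the localisation sequence for $\G_m\subset\A^1$. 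Finally $(-1)$-connectedness: $\sH_n=0$ for $n<0$ because $H^n_\cont(X,\Q_l(n))$ sits in degree $n<0$, below the bottom of the complex $\Q_l(n)^c$ (which for $n<0$ is concentrated in nonnegative degrees, or one simply invokes $H^i(\Q_l(n)^c)=0$ for $i<n$ together with the vanishing of negative-degree cohomology).

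The main obstacle --- and the real content --- is showing that each $\sH_n\otimes\Q$ is \emph{reduced} for $n>0$, i.e.\ $(\sH_n\otimes\Q)_\nr=0$. Here I would use the $l$-adic computation of Section \ref{s1}. By Corollary \ref{c1} b), for $X=\bar X-Z$ with $\bar X$ smooth projective and $Z$ a normal crossing divisor, $H^n_\cont(X,\Q_l(n))$ injects into $\bigoplus_{|J|=n}H^0(Z_J,\Q_l(0))$, and the right-hand side is a birational invariant in the appropriate sense: $H^0(Z_J,\Q_l(0))$ depends only on the set of connected components of $Z_J$, which for $n>0$ (so $Z_J$ of positive codimension, in particular a proper closed subset) is an ``exceptional-locus'' contribution that vanishes on a dense open. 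More precisely, the recipe is: $\sF_\nr$ for $\sF\in\HI$ is computed by evaluating $\sF$ at generic points / on a cofinal system of dense opens (\cite[Th. 7.3.1]{birat-tri}), and Corollary \ref{c1} b) shows that a class in $H^n_\cont(X,\Q_l(n))$ is detected by its residues along the components $Z_J$ with $|J|=n$; shrinking $X$ to remove more and more of these components (a successive-singular-loci argument as in the proof of Lemma \ref{l4} b)) kills any given class once $n\ge 1$, so $(\sH_n\otimes\Q)_\nr(F)=0$ for every function field $F$, whence $(\sH_n\otimes\Q)_\nr=0$. I expect the delicate point to be making the reduction to the normal-crossings situation functorially enough to conclude about the sheaf $(\sH_n\otimes\Q)_\nr$ rather than just its sections at fields --- this is where de Jong's alterations and Proposition \ref{p6} (the transfer $p_*p^*=\delta$ for an alteration $p$, which lets one descend the vanishing from an alteration with good compactification back to $X$ after inverting the degree, harmless with $\Q$-coefficients) should be invoked, exactly as the introduction's proof sketch indicates.
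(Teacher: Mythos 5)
Your treatment of the easy parts (transfers, $\A^1$-invariance, the étale sheaf statement, the homotopic-module structure via the projective-line formula for the contraction) matches the paper's proof, with one welcome simplification: you deduce $\sH_n\otimes\Q=0$ for $n<0$ directly from $\Q_l(n)^c\in D^{\geq 0}$, whereas the paper cites the (stronger) Weil-conjecture fact that $\Q_l(n)^c=0$ for $n<0$; your route suffices for $(-1)$-connectedness.

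The gap is in reducedness of $\sH_n\otimes\Q$ for $n>0$. Your ``shrinking'' argument does not close: passing to a dense open $U\subset X$ only enlarges $\sF(U)$ and leaves $\sF_\nr(X)\iso\sF_\nr(U)$ unchanged, and the residues along the boundary strata $Z_J\subset \bar X\setminus X$ appearing in Corollary~\ref{c1}~b) are \emph{not} the residues along divisors of $X$ that membership in the birational subsheaf $\sF_\nr$ would control; moreover removing ``components $Z_J$'' by shrinking $X$ doesn't even make sense since $Z_J\cap X=\emptyset$. What one actually needs — and what the paper invokes — is the formula
$\sF_{<1}(X)=\sF_\nr(X)=\sF(\bar X)$ for $\sF\in\HI$ whenever $X$ admits a smooth compactification $\bar X$ \cite[Cor.~7.3.2]{birat-tri}. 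Given this, reducedness is immediate: for $\sF=\sH_n\otimes\Q$ with $n>0$ and $\bar X$ smooth projective over the finite field $k$, $\sF(\bar X)=H^n_\cont(\bar X,\Q_l(n))=0$ by the Weil conjectures (Frobenius eigenvalues on $H^n(\bar X_{\bar k},\Q_l(n))$ have weight $\ne 0$), and de Jong's alterations plus the transfer of Proposition~\ref{p6} handle general $X$ over $\Q$. You correctly name de Jong and Proposition~\ref{p6} in your last paragraph, but without the $\sF_\nr(X)=\sF(\bar X)$ identity — which replaces your residue-based heuristic — the argument does not go through.
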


\begin{proof} That finite correspondences act on étale cohomology with coefficients in twisted roots of unity follows from \cite[Th. 10.3]{mvw}. Since this action commutes with change of coefficients, it induces one on  $\sH_n$. Its $\A^1$-invariance is classical, and moreover $(\sH_n)_{-1}\simeq \sH_{n-1}$ by the projective line formula in étale cohomology.  With the notation of the introduction, $H^n(\Q_l(n)^c)$ is the étale sheaf associated to $\sH_n\otimes \Q$, which is therefore already an étale sheaf by \eqref{eq0}. Moreover, the Weil conjectures imply that $\Q_l(n)^c=0$ for $n<0$ \cite[Cor. 6.10 b)]{tate}, hence $(\sH_n\otimes \Q)\in \HI_*^c$. Finally,  $\sH_n\otimes \Q$ is reduced for $n>0$ once again by the Weil conjectures plus de Jong's theorem \cite[Th. 4.1]{dJ}, since $\sF_{<1}(X) =\sF(\bar X) $ for any $\sF\in \HI$ if $X$ has a smooth compactification $\bar X$ \cite[Cor. 7.3.2]{birat-tri}. 
\end{proof}

%Let $\sK_n^M=H^n(\Z(n))$: this is  the Nisnevich sheaf with transfers associated to the presheaf of Milnor $K$-groups. In \cite[Prop. 8.10 and (9.4)]{tate}, we defined a morphism of Nisnevich sheaves with transfers
%\[\sK_n^M\otimes \Q_l\to \sH_n. \]

Since $(\sK_n^M)\in \HI_*^c$, \eqref{eq-1} for all $n\ge 0$ factors through a morphism in $\HI_*^\rd$
\begin{equation}\label{eq5}
(\sK_n^M)\otimes \Q_l\surj (\sK_n^M)^\rd\otimes \Q_l\to (\sH_n\otimes \Q) 
\end{equation}
by Theorem \ref{t1} and Proposition \ref{p5}. Here we forget that $\sH_n\otimes \Q$ is an étale sheaf, and only remember its Nisnevich sheaf structure.

\begin{thm}\label{t4} Let $(\bar X,Z,X)$ be as in the beginning of \S \ref{s1}. Then the second map of \eqref{eq5} is an isomorphism when evaluated at $X$.
\end{thm}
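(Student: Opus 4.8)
The plan is to evaluate both sides of \eqref{eq5} on $X$ and compare them via the exact sequences already set up in the two parallel computations of Sections \ref{s1} and \ref{s4}. On the one hand, apply Proposition \ref{t2} to the reduced homotopic module $(\sK_n^M)^\rd\otimes\Q_l$: since $((\sK_n^M)^\rd\otimes\Q_l)_0=\sK_0^M\otimes\Q_l=\Q_l$ (the $0$-th piece is unchanged by the reduction functor, as $\sF_{<0}=0$) and $((\sK_n^M)^\rd\otimes\Q_l)_1=(\sK_1^M\otimes\Q_l)_\rd=(\G_m\otimes\Q_l)_\rd$, the proposition gives an exact sequence expressing $((\sK_n^M)^\rd\otimes\Q_l)(X)$ as the kernel of a map $\bigoplus_{|J|=n}\Q_l(Z_J)\to\bigoplus_{|J|=n-1}H^1(Z_J,(\sK_1^M)^\rd\otimes\Q_l)$. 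On the other hand, Corollary \ref{c1} b), applied to $\Q_l$-coefficients (recall $\sH_n(X)\otimes\Q=H^n_\cont(X,\Q_l(n))$ by the sheaf statement in \eqref{eq0}), gives $\sH_n(X)\otimes\Q$ as the kernel of the Gysin map $\bigoplus_{|J|=n}H^0(Z_J,0)\to\bigoplus_{|J|=n-1}H^2(Z_J,1)$. The point is that $H^0(Z_J,0)=H^0_\cont(Z_J,\Q_l(0))=\Q_l$ (each $Z_J$ being smooth projective geometrically irreducible, or a disjoint union of such), $H^2(Z_J,1)=H^2_\cont(Z_J,\Q_l(1))$, and that \eqref{eq-1} identifies $\sK_1^M\otimes\Q_l=\G_m\otimes\Q_l$ with (the Nisnevich sheaf underlying) $\sH_1\otimes\Q$; one then checks that the reduced quotient $(\G_m\otimes\Q_l)_\rd$ evaluated on smooth projective $Z_J$ computes, via $H^1$, the same groups $H^2_\cont(Z_J,\Q_l(1))$, using $\sF_{<1}(Y)=\sF(\bar Y)$ for a smooth compactification.

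The key structural step is therefore to show that the second map of \eqref{eq5}, evaluated on $X$, fits into a morphism between these two four-term exact sequences that is the identity (or a known isomorphism) on the outer terms $\bigoplus_J\Q_l(Z_J)$ and the term $\bigoplus_J H^1(Z_J,-)$ versus $\bigoplus_J H^2_\cont(Z_J,\Q_l(1))$. Compatibility on the degree-$0$ terms is immediate because $((\sK_n^M)^\rd\otimes\Q_l)_0\to\sH_0\otimes\Q$ is an isomorphism (both are the constant sheaf $\Q_l$, $\sH_0$ being reduced of coniveau $<1$ after one checks its global sections on a smooth variety are those of $\bar X$). Compatibility on the degree-$1$ terms reduces to the assertion that \eqref{eq-1} in weight $1$, namely $\G_m\otimes\Q_l\to\sH^1(\Q_l(1)^c)$, is an isomorphism of sheaves — which is a form of Hilbert 90 / the classical computation $H^1_\cont(Y,\Q_l(1))\cong\Pic(Y)\otimes\Q_l$ and the identification of the relevant $H^1$ and $H^2$ groups — together with the fact that forming $(-)_\rd$ commutes with evaluating $H^1$ on smooth projective $Z_J$ (via the long exact sequence $0\to\sF_\nr\to\sF\to\sF_\rd\to 0$ of Lemma \ref{l1} and $H^1(Z_J,\sF_\nr)=0$). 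Granting these identifications, the naturality of the spectral sequences \eqref{eq1} and \eqref{eq2} in the sheaf/homotopic module — both $d^1$ differentials being Gysin/boundary maps, matched under \eqref{eq-1} because the transfer structure of Proposition \ref{p5} is compatible with that of $\sK_*^M$ — yields a commutative ladder, and the five lemma applied to the two four-term exact sequences gives the isomorphism at $X$.

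I expect the main obstacle to be the naturality/compatibility of the two spectral sequences and their differentials: one must verify that the morphism \eqref{eq-1} of (pre)sheaves with transfers induces a morphism of the spectral sequences \eqref{eq1} $\to$ (the $\Q_l$-version of) \eqref{eq2}, i.e. that the identification of $\sK_*^M\otimes\Q_l$ with the homotopic module $(\sH_n\otimes\Q)$ in low weights intertwines the Gysin maps of continuous étale cohomology with the boundary maps $\partial$ of Lemma \ref{l3}. This is essentially the statement that \eqref{eq-1} is a morphism of cycle modules / oriented theories in the bottom two weights, which is standard but needs to be invoked carefully. A secondary technical point is to pin down $\sH_0\otimes\Q$ precisely — in particular that its Nisnevich-sheaf underlying object is the constant sheaf $\Q_l$ so that the degree-$0$ comparison is literally the identity — using that $H^0_\cont(Y,\Q_l)=\Q_l$ for $Y$ smooth connected and that $\sH_0\otimes\Q$ is reduced of coniveau $<1$, hence determined by its values on smooth projective varieties. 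Once these identifications are in place, the five-lemma argument is formal.
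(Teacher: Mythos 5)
Your overall strategy—comparing the two exact sequences from Corollary \ref{c1} b) and Proposition \ref{t2} via a commutative ladder—is exactly the paper's approach. However, there is a real gap in your handling of the rightmost vertical map $c$.

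You assert that $c$ should be an isomorphism, and that ``$(\G_m\otimes\Q_l)_\rd$ evaluated on smooth projective $Z_J$ computes, via $H^1$, the same groups $H^2_\cont(Z_J,\Q_l(1))$.'' This is false in general: $H^1(Z_J,\G_m\otimes\Q_l)=\Pic(Z_J)\otimes\Q_l$, and over a finite field $H^2_\cont(Z_J,\Q_l(1))\cong H^2(\bar Z_J,\Q_l(1))^{G_k}$, so surjectivity of $c$ is precisely the Tate conjecture for divisors on $Z_J$, which is not known unconditionally. Invoking ``Hilbert 90'' here conflates $H^1_\cont(Y,\Q_l(1))$ with $H^2_\cont(Y,\Q_l(1))$; the former is indeed $\Pic$-related, the latter is where the Tate classes live. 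Fortunately, the isomorphism is not needed: since both rows are four-term exact sequences $0\to A\to B\to C$, the version of the five lemma with $b$ iso and $c$ merely \emph{injective} already forces $a$ to be an isomorphism. Injectivity of $c$ is the (known) injectivity of the divisor cycle class map, coming from $\Pic^0(Z_J)$ being finite.

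The paper also handles the $\sF_1$ term more cleanly than your proposal: rather than reasoning about the reduced quotient $(\G_m\otimes\Q_l)_\rd$, one observes that $\sK_1^M\otimes\Q_l=\G_m\otimes\Q_l$ is \emph{already} reduced, because for a finite field $k$ and any finite extension $E/k$ the group $E^*$ is finite, hence $\G_m(\bar Y)\otimes\Q_l=0$ for $\bar Y$ smooth projective. So $\sF_1=\G_m\otimes\Q_l$ on the nose, the $H^1$ is literally $\Pic\otimes\Q_l$, and $c$ is the classical divisor cycle class map. Your secondary concerns about naturality of the spectral sequences and compatibility of Gysin/boundary maps with \eqref{eq-1} are reasonable points to flag, and the paper addresses them with the single word ``functoriality,'' but they are not where the difficulty lies.
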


\begin{proof} By functoriality,  Propositions \ref{p1} and \ref{t2} yield via \eqref{eq5} a commutative diagram of short exact sequences
\[\begin{CD}
0@>>> \sF_n(X)@>>> \bigoplus_{|J|=n} \sF_0(Z_J)@>{i_n}>> \bigoplus_{|J|=n-1} H^1(Z_J,\sF_1)\\
&&@Va VV @Vb VV @Vc VV\\
0@>>> \sH_n(X)@>>> \bigoplus_{|J|=n} H^0(Z_J,0)@>{i_n}>> \bigoplus_{|J|=n-1} H^2(Z_J,1)
\end{CD}\]
where $\sF_n$ is the $n$-th term of $(\sK_n^M)^\rd\otimes \Q_l$. But $\sK_0^M=\Z$, hence  $b$ is an isomorphism, and $\sK_1^M=\G_m$; in particular, $\sK_1^M\otimes \Q_l$ is reduced because $E^*$ is finite for any finite extension $E/k$. It follows that $c$, which is the cycle class map for divisors, is injective. A diagram chase now shows that $a$ is bijective.
\end{proof}

\section{Proof of Theorem \ref{t0}}

The field $k$ is still finite.

\begin{thm} \label{t5} The second map of \eqref{eq5} is an isomorphism for any $n\in \Z$. If $n\le 2$, the composition is an epimorphism of presheaves.
\end{thm}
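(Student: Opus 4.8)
The plan is to deduce Theorem \ref{t5} from the special case already established in Theorem \ref{t4}, by a globalisation argument resting on de Jong's theorem on alterations together with the transfer maps of Proposition \ref{p6}; the presheaf assertion for $n\le 2$ is then obtained from the birational sheaf machinery of Section 3.

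For the isomorphy statement, write $\psi_n$ for the $n$-th component of the second map of \eqref{eq5}. A morphism of Nisnevich sheaves on smooth $k$-schemes is an isomorphism as soon as it is bijective on sections over every smooth affine $k$-scheme, so it suffices to check that $\psi_n(X)$ is bijective for such an $X$. For $n<0$ both sides vanish ($\sK_n^M=0$, and $\sH_n\otimes\Q=0$ by the Weil conjectures, exactly as in the proof of Proposition \ref{p5}), so assume $n\ge 0$. By de Jong's theorem \cite{dJ} there is an alteration $p\colon X'\to X$ of some generic degree $\delta\ge 1$ with $X'$ of the type considered in \S\ref{s1}, i.e.\ $X'$ is a dense open subset of a smooth projective variety whose complement is a strict normal crossing divisor. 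Applying Proposition \ref{p6} to the two homotopy invariant sheaves with transfers $(\sK_n^M)^\rd\otimes\Q_l$ and $\sH_n\otimes\Q$, and using that $p^*$ and $p_*$ are natural in the sheaf (hence commute with $\psi_n$), we obtain a commutative square relating $\psi_n(X)$ to $\psi_n(X')$ in which both vertical maps $p^*$ are split monomorphisms with retraction $\frac{1}{\delta}p_*$ --- here $\delta$ is invertible since all sheaves involved are $\Q$-linear. Since $\psi_n(X')$ is an isomorphism by Theorem \ref{t4}, a one-line diagram chase gives that $\psi_n(X)$ is an isomorphism. Hence $\psi_n$ is an isomorphism of Nisnevich sheaves for every $n\in\Z$.

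For the presheaf assertion, the composition of \eqref{eq5} is $u_n$ followed by $\psi_n$, where $u_n\colon\sK_n^M\otimes\Q_l\to(\sK_n^M)^\rd\otimes\Q_l$ is the $n$-th component of the unit of Theorem \ref{t1}, i.e.\ the unit $\sK_n^M\otimes\Q_l\to(\sK_n^M\otimes\Q_l)_\nrd$ of the $n$-reduction of Proposition \ref{p4} b). As $\psi_n$ is now known to be an isomorphism of sheaves, hence bijective on all sections, it is enough to show that $u_n$ is an epimorphism of presheaves for $n\le 2$. This is trivial for $n=0$ (where $\HI_{<0}=0$) and is Lemma \ref{l1} for $n=1$ (the short exact sequence $0\to\sF_\nr(X)\to\sF(X)\to\sF_\rd(X)\to 0$). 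For $n=2$ the key point is that $(\sK_2^M\otimes\Q_l)_{<2}$ is a \emph{birational} sheaf: by Proposition \ref{p4} a) its contraction is $((\sK_2^M\otimes\Q_l)_{-1})_{<1}=(\sK_1^M\otimes\Q_l)_\nr$, which vanishes over a finite field because $E^*$ is finite for every finite extension $E/k$ (as recalled in the proof of Theorem \ref{t4}); being a birational subsheaf of $\sK_2^M\otimes\Q_l$ it therefore coincides with $(\sK_2^M\otimes\Q_l)_\nr$. Hence $H^1(X,(\sK_2^M\otimes\Q_l)_{<2})=0$ for all smooth $X$ by \cite[Lemma 2.3.2]{birat-tri}, and the argument of Lemma \ref{l1} applies verbatim to show that $u_2$ is surjective on sections.

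The first part is purely formal once Theorem \ref{t4} and Proposition \ref{p6} are at hand; the only point of care is that de Jong's theorem is applied to a quasi-projective (e.g.\ affine) $X$, which is harmless since these form a basis of the Nisnevich topology. The genuinely delicate step, and the source of the hypothesis $n\le 2$, is the identification $(\sK_2^M\otimes\Q_l)_{<2}=(\sK_2^M\otimes\Q_l)_\nr$: it relies on the vanishing $(\sK_1^M\otimes\Q_l)_\nr=0$ over a finite field, which is what makes the $H^1$-vanishing of \cite[Lemma 2.3.2]{birat-tri} available. For $n\ge 3$ the corresponding contraction is $(\sK_{n-1}^M\otimes\Q_l)_{<n-1}$, which need not be birational, so this method does not extend.
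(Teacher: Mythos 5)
Your proof is correct and takes essentially the same approach as the paper: de Jong's theorem on alterations together with the transfers of Proposition \ref{p6} for the isomorphy of the second map, then reduction to Lemma \ref{l1} for the presheaf surjectivity when $n\le 2$. The paper's ``follows from Lemma \ref{l1}'' is terse, and your unpacking of it --- that $(\sK_1^M\otimes\Q_l)_\nr=0$ over a finite field forces $(\sK_2^M\otimes\Q_l)_{<2}$ to be birational and hence equal to $(\sK_2^M\otimes\Q_l)_\nr$, so that the $H^1$-vanishing of \cite[Lemma 2.3.2]{birat-tri} applies --- is exactly the intended argument.
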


\begin{comment}
Let $K_n$ (resp. $C_n$) be the kernel (resp. cokernel) of  this map: Theorem \ref{t5} is equivalent to $K_n=C_n=0$.
This is true for $n\le 0$. We argue by induction on $n$. 

\begin{lemma}\label{l5} Let $U$ be a (nonempty) open subset of a smooth irreducible $k$-variety $X$. Then $K_n(X)=0$ $\iff$ $K_n(U)=0$, and similarly for $C_n$.
\end{lemma}

\begin{proof} As in the proof of Lemma \ref{l4} b), we reduce to the case where $X-U$ is the support of a smooth subvariety of pure codimension $c>0$. If $c>1$,  there is nothing to prove by Lemma \ref{l3}, and if $c=1$ we conclude from the same lemma by induction, noting that $(K_n)_{-1}=K_{n-1}$ and  $(C_n)_{-1}=C_{n-1}$.
\end{proof}

\begin{lemma}\label{l6} Let $V\to U$ be an étale covering of smooth irreducible $k$-varieties. Then $K_n(V)=0$ $\Rightarrow$ $K_n(U)=0$, and similarly for $C_n$.
\end{lemma}

\begin{proof} This is a classical transfer argument.
\end{proof}
\end{comment}

\begin{proof} Let $X$ be smooth irreducible. By \cite[Th. 4.1]{dJ}, applied with $Z=\emptyset$, there is an alteration $p:X_1\to X$ and a dense open immersion $X_1\subseteq  \bar X_1$ such that $\bar X_1$ is smooth projective and $\bar X_1-X_1$ is the support of a divisor with strict normal crossings. By Theorem \ref{t4}, the statement is true at $X_1$, hence it is true at $X$ thanks to Proposition \ref{p6}. For $n\le 2$, the claim follows from Lemma \ref{l1}.
\end{proof}

\begin{thm}\label{t0Q} %For any smooth $k$-variety $X$, the map
%\begin{equation}\label{eq-2}
%H^0(X,\sK_n^M)\otimes \Q_l\to H^n_\cont(X,\Q_l(n))
%\end{equation}
%induced by 
The morphism \eqref{eq-4} is an epimorphism after tensorisation with $\Q$. 
\end{thm}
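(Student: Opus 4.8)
The plan is to deduce Theorem~\ref{t0Q} from Theorem~\ref{t5} together with \eqref{eq0}: the substantive work has already been done, and what is left is a translation between the Nisnevich and Zariski topologies.

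First I would pin down the objects involved. By construction (see the introduction), \eqref{eq-4} is the Zariski sheafification of the presheaf homomorphism \eqref{eq-3}, and since Zariski sheafification is exact and commutes with the filtered colimit defining $-\otimes\Q$, the morphism \eqref{eq-4}$\otimes\Q$ is the Zariski sheafification of \eqref{eq-3}$\otimes\Q$. By \eqref{eq0}, the presheaf $X\mapsto H^n_\cont(X,\Q_l(n))=\sH_n(X)\otimes\Q$ is already an \'etale --- hence a Zariski --- sheaf, and $\sK_n^M\otimes\Q_l$ is already a Zariski sheaf; so \eqref{eq-4}$\otimes\Q$ is a morphism $\sK_n^M\otimes\Q_l\to\sH_n\otimes\Q$ of Zariski sheaves. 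Both $\sK_n^M\otimes\Q_l$ and $\sH_n\otimes\Q$ are homotopy invariant Nisnevich sheaves with transfers, so their Nisnevich and Zariski sheafifications coincide \cite[Th.~22.2]{mvw}; under this identification, \eqref{eq-4}$\otimes\Q$ is precisely the composite of \eqref{eq5} --- indeed the first arrow of \eqref{eq5} is the universal map to the reduced homotopic module and the second is the induced factorisation of the sheaf-with-transfers incarnation of \eqref{eq-1}$\otimes\Q$.

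It then suffices to prove that the composite of \eqref{eq5} is an epimorphism of Nisnevich sheaves. Its first arrow is an epimorphism of graded sheaves by Theorem~\ref{t1}, hence an epimorphism of Nisnevich sheaves in each degree, and its second arrow is an isomorphism by Theorem~\ref{t5}; so the composite is an epimorphism in $\HI$. Its cokernel is thus the zero Nisnevich sheaf with transfers, and being a homotopy invariant presheaf with transfers it has the same Nisnevich and Zariski sheafifications \cite[Th.~22.2]{mvw}; therefore it vanishes as a Zariski sheaf as well, and \eqref{eq-4}$\otimes\Q$ is an epimorphism of Zariski sheaves, which is the content of Theorem~\ref{t0Q}.

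I do not expect any real obstacle here beyond Theorem~\ref{t5}: the only points requiring (routine) verification are the exactness and $\otimes\Q$-compatibility of Zariski sheafification, the equality of the Nisnevich and Zariski sheafifications for homotopy invariant presheaves with transfers, and the check that the composite of \eqref{eq5} is, on the nose, the Zariski (equivalently Nisnevich) sheafification of \eqref{eq-3}$\otimes\Q$.
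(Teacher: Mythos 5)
Your proof is correct and follows exactly the paper's route: combine Theorem~\ref{t5} with the epimorphy in Theorem~\ref{t1} to get a Nisnevich epimorphism, then pass to the Zariski topology via \cite[Th.~22.2]{mvw}. The only difference is that you spell out the identification of \eqref{eq-4}$\otimes\Q$ with the composite of \eqref{eq5} more explicitly than the paper does, which is a useful (if routine) clarification rather than a new idea.
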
 

\begin{proof}
By Theorem \ref{t5} and the epimorphy in Theorem \ref{t1},  \eqref{eq-1} is an epimorphism of Nisnevich sheaves, hence also of Zariski sheaves by \cite[Th. 22.2]{mvw}. 
\end{proof}

\begin{prop}\label{p7} The cokernel of $H^n(X,\Z(n))\otimes \Z_l\to H^n_\cont(X,\Z_l(n))$ is torsion-free, and its kernel is divisible.
\end{prop}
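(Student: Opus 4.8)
The plan is to sandwich the map of Proposition \ref{p7}, call it $f\colon A\otimes\Z_l\to H^n_\cont(X,\Z_l(n))$ with $A:=H^n(X,\Z(n))$, between two elementary maps: the $l$-adic completion map $g\colon A\otimes\Z_l\to\hat A:=\lim_\nu A/l^\nu$ on the source, and Jannsen's edge map $H^n_\cont(X,\Z_l(n))\to\lim_\nu H^n_\et(X,\mu_{l^\nu}^{\otimes n})$ on the target. The bridge between the two sides is the norm residue isomorphism theorem of \cite{voem} (equivalently, Beilinson--Lichtenbaum in degrees $\le n$), which gives $H^i(X,\Z/l^\nu(n))\iso H^i_\et(X,\mu_{l^\nu}^{\otimes n})$ for $i\le n$, so that ``mod $l^\nu$'' the motivic and the $l$-adic theories coincide; since $f$ is by construction compatible with reduction mod $l^\nu$, this lets one identify $f$ with $\iota\circ g$ for a suitable injection $\iota$.

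The first step is to record finiteness facts. As $X$ is smooth of finite type over a finite field and $l$ is invertible in $k$, every $H^i_\et(X,\mu_{l^\nu}^{\otimes n})$ is finite; hence the tower $(H^{n-1}_\et(X,\mu_{l^\nu}^{\otimes n}))_\nu$ is Mittag--Leffler, its $\lim^1$ vanishes, and \cite{jannsen} gives $H^n_\cont(X,\Z_l(n))\iso\lim_\nu H^n_\et(X,\mu_{l^\nu}^{\otimes n})$. The Bockstein sequences $0\to A/l^\nu\to H^n(X,\Z/l^\nu(n))\to H^{n+1}(X,\Z(n))[l^\nu]\to0$, combined with the norm residue isomorphism, show that $A/l^\nu$ is finite for all $\nu$, so $\hat A$ is an $l$-adically complete $\Z_l$-module with $\hat A/l\hat A\iso A/lA$ finite, hence finitely generated over $\Z_l$. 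Likewise $A[l]$ is a quotient of $H^{n-1}(X,\Z/l(n))\iso H^{n-1}_\et(X,\mu_l^{\otimes n})$, hence finite, and therefore $(A\otimes\Z_l)[l]=A[l]$ is finite.

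Next I would assemble the picture. Because the regulator comes from $\Z(n)\to\Z(n)/l^\nu$ (inducing the norm residue isomorphism in degree $n$), $f$ factors as $A\otimes\Z_l\by{g}\hat A\by{\iota}H^n_\cont(X,\Z_l(n))$, where $\iota$ is injective: $\hat A=\lim_\nu A/l^\nu$ is the limit of the subgroups $A/l^\nu\inj H^n(X,\Z/l^\nu(n))\iso H^n_\et(X,\mu_{l^\nu}^{\otimes n})$, and passing to $\lim_\nu$ in the Bockstein sequences (the term $\lim^1 A/l^\nu$ vanishes, the transition maps being surjective) identifies $\Coker\iota$ with $\lim_\nu H^{n+1}(X,\Z(n))[l^\nu]=T_l H^{n+1}(X,\Z(n))$, which is torsion-free. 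Since $\hat A$ is finitely generated over $\Z_l$ and $\IM g$ is a $\Z_l$-submodule with $\IM g+l\hat A=\hat A$ (as $A\otimes\Z_l\to A/l$ is onto), Nakayama gives that $g$ is onto; hence $\Coker f=\Coker\iota=T_l H^{n+1}(X,\Z(n))$ is torsion-free. For the kernel, $\ker f=\ker g=\bigcap_\nu\ker(A\otimes\Z_l\to A/l^\nu)=\bigcap_\nu l^\nu(A\otimes\Z_l)$; this subgroup is divisible, because given $x$ in it one can write $x=ly_\nu$ with $y_\nu\in l^{\nu-1}(A\otimes\Z_l)$, the differences $y_\nu-y_1$ lie in the finite group $(A\otimes\Z_l)[l]$, so some value $y^*$ occurs for infinitely many $\nu$, and then $y^*\in\bigcap_\nu l^\nu(A\otimes\Z_l)$ with $ly^*=x$.

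The only inputs that are not formal are the norm residue theorem and the finiteness of étale cohomology of varieties over finite fields; everything else is bookkeeping with inverse limits and Nakayama's lemma. The step I expect to require the most care is the exactness of $\lim_\nu$ on the relevant towers of finite groups --- used both to identify $H^n_\cont(X,\Z_l(n))$ with $\lim_\nu H^n_\et(X,\mu_{l^\nu}^{\otimes n})$ and to compute $\Coker\iota$ --- together with checking that the comparison map genuinely factors through $\hat A$ in a way compatible with the norm residue isomorphisms.
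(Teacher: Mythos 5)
Your argument is correct, and it is a genuinely different and more self-contained route than the one the paper takes. The paper's proof is a two-line reduction: it cites \cite[Cor.~3.5]{cycle-etale} for the analogous statement about $H^i_\et(X,\Z(n))\otimes\Z_l\to H^i_\cont(X,\Z_l(n))$ (valid for all $i$), and then invokes the Beilinson--Lichtenbaum consequence of the norm residue theorem to identify $H^n(X,\Z(n))\otimes\Z_l$ with $H^n_\et(X,\Z(n))\otimes\Z_l$. You instead prove everything from scratch by unwinding Jannsen's definition: finiteness of étale cohomology over a finite field kills the $\lim^1$ term, so $H^n_\cont$ is the honest inverse limit $\lim_\nu H^n_\et(X,\mu_{l^\nu}^{\otimes n})$; the norm residue theorem enters through the finite-coefficient isomorphism $H^n(X,\Z/l^\nu(n))\iso H^n_\et(X,\mu_{l^\nu}^{\otimes n})$, which together with the Bockstein sequences yields the needed finiteness of $A/l^\nu$ and $A[l]$; and the rest is bookkeeping with the $l$-adic completion $\hat A$ (factorization $f=\iota\circ g$, Nakayama for surjectivity of $g$, the pigeonhole argument for divisibility of $\bigcap_\nu l^\nu(A\otimes\Z_l)$). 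The one step I would flag as not entirely automatic is the assertion $\hat A/l\hat A\iso A/lA$ (and hence $\hat A$ finitely generated over $\Z_l$): this is true because the tower $(A/l^\nu A)_\nu$ has finite terms, so that taking $\lim$ of the exact sequences $0\to lA/(l^\nu A\cap lA)\to A/l^\nu A\to A/lA\to 0$ identifies $\ker(\hat A\to A/lA)$ with $l\hat A$, but it deserves a sentence in a final write-up rather than being stated as obvious. Both proofs rest essentially on the norm residue isomorphism theorem and on finiteness of $l$-adic étale cohomology over finite fields; yours trades the external citation for a longer but elementary computation, with the bonus of making the cokernel explicit as the Tate module $T_l H^{n+1}(X,\Z(n))$.
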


\begin{proof} By \cite[Cor. 3.5]{cycle-etale}, the map $H^i_\et(X,\Z(n))\otimes \Z_l\to H^i_\cont(X,\Z_l(n))$ has the said properties for any $i$. On the other hand,  the norm residue isomorphism theorem implies that the map $H^n(X,\Z(n))\otimes \Z_l\to H^n_\et(X,\Z(n))\allowbreak\otimes \Z_l$ is an isomorphism (Beilinson-Lichtenbaum conjecture, \cite{voem}).
\end{proof}

\begin{proof}[Proof of Theorem \ref{t0}] By Theorem \ref{t0Q}, the cokernel of \eqref{eq-4} is torsion. On the other hand,  that of Proposition \ref{p7} remains torsion-free after Zariski-sheafification (an exact functor). But  \eqref{eq-4} factors as a composition
\begin{equation}\label{eq6.2}
\sK_n^M\otimes \Z_l\to \sH^n(\Z(n))\otimes \Z_l\to \sH^n_\cont(\Z_l(n))
\end{equation}
where the first morphism is epi by \cite[Th. 5.1]{mvw} and \cite[Prop. 4.3]{ev-ms}. Therefore the cokernel of \eqref{eq-4} is $0$.
\end{proof}

\begin{rk} In \eqref{eq6.2}, the kernel of the second morphism is divisible (same reasoning as before), while the kernel of the first morphism is killed by $(n-1)!$ (see \emph{e.g.} parts (6) and (11) of \cite[Prop. 10]{kerz}), where its failure to be $0$ comes from too small residue fields (Part (5) of \emph{loc. cit.}). To correct this and obtain divisibility of  the kernel of \eqref{eq-4}, one may replace the  sheaves of Milnor $K$-groups by sheaves of improved Milnor $K$-groups as in \emph{loc. cit.}
\end{rk}

%\begin{defn}Let $M_*$ be a cycle module over $k$ in the sense of Rost \cite{rost}, which is  effective in the sense that $M_m=0$ for $m<0$. We say that $M_*$ is \emph{reduced} if $A^0(X,M_m)=0$ for any smooth projective $X/k$ and any $m>0$.
%\end{defn}

%Suppose that $M_*$ has $\Q$ coefficients. If $K/k$ is a finitely generated extension and $L/K$ is a finite extension such that $L$ admits a smooth projective model $X/k$, we set
%\[\bar M_*(K)= \Coker(A^0(X,M_*)\to M_*(L)\by{\tr_{L/K}} M_*(K)).\]

%Such $L/K$ always exists, by de Jong's theorem \cite{dJ}.

%\begin{prop} $\bar M_*(K)$ does not depend on the choice of $L$ and $X$; this defines a reduced cycle module $\bar M_*$ provided with a morphism $M_*\to \bar M_*$ which is universal for morphisms from $M_*$ to reduced cycle modules.
%\end{prop}

%\begin{proof} One knows that $A^0(-,M_*)$ is a birational invariant of smooth projective varieties, so $A^0(X,M_*)$ only depends on $L$. Let $(L', X')$ be another choice; we can find a third one $(L'',X'')$ such that $X''$ maps to $X$ and to $X'$. The functoriality of traces and the presence of $\Q$ coefficients show that $A^0(X,M_*)$, $A^0(X',M_*)$ and $A^0(X'',M_*)$ have the same image in $M_*(K)$.

%Checking the axioms of a cycle module for $\bar M_*$ 
%\end{proof}

\section{The global sections of Milnor $K$-sheaves}\label{s7}

To say that the sheaf $\sK_n^M\otimes \Q$ is reduced is exactly to say that 
\begin{equation}\label{eq7}
H^0(X,\sK_n^M)\otimes \Q=0
\end{equation}
for any connected smooth projective $k$-variety $X$. This is true for $n=1$, because this group is $E^*\otimes \Q$ where $E$ is the field of constants of $X$, and $E$ is finite (this fact was used in the proof of Theorem \ref{t4}). For $n>1$ it is open, but still true for certain smooth projective $X$:
%\begin{defn}\label{d5} A smooth projective variety $X$ is \emph{birationally of abelian type} if its birational Chow motive \cite{birat-pure} is isomorphic to a direct summand of that of an abelian variety $A$ (possibly after a finite extension of scalars). We say that $X\in B_\tate^\o(k)$ if $A$ may be chosen such that it verifies the Tate conjecture. 
%\end{defn}
%\begin{rk} 
recall that $X$ is \emph{of abelian type} if its Chow motive is a direct summand of that of an abelian variety (possibly after a finite extension of $k$). Then %This is equivalent to Definition \ref{d5} if $\dim X\le 2$, but stronger in general if $\dim X>2$.
%\end{rk}

\begin{thm}\label{t6} Let $n\ge 2$. Suppose that $X$ is of abelian type and that the Tate conjecture holds for $X$ in codimension $n$. Then  \eqref{eq7} holds. 
%for any $n>0$ if $X\in B_\tate^\o(k)$.
\end{thm}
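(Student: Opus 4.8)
The plan is to show that, for $X$ of abelian type with the Tate conjecture in codimension $n$, the group $H^0(X,\sK_n^M)\otimes\Q$ vanishes. First I would express this group more tractably. Since $\sK_n^M$ is a homotopy invariant Nisnevich sheaf with transfers and $H^n(X,\sK_n^M)\simeq CH^n(X)$, the group $H^0(X,\sK_n^M)$ sits at the bottom of the Gersten complex and, more to the point, $\sK_n^M\otimes\Q$ is a summand of the motivic cohomology sheaf in bidegree $(n,n)$; its global sections on a connected $X$ compute $H^0(X,\sK_n^M)\otimes\Q = \Hom_{\DM^\eff}(M(X),\Q(n)[n])$ restricted to the ``birational'' part, i.e. $(\sK_n^M\otimes\Q)(X) = (\sK_n^M\otimes\Q)_\nr(\bar X)$ where here $X=\bar X$ is already projective. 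Equivalently, by \cite[Cor. 7.3.2]{birat-tri} (used already in the proof of Proposition \ref{p5}) the relevant group is $\Hom_{\DM^\o}(\nu_{<1}M(X),\Q(n)[n])$, a birational invariant.

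Next I would bring in the abelian type hypothesis. Writing $M(X)$ as a summand of $M(A)$ for an abelian variety $A$ (after a finite extension, which is harmless after $\otimes\Q$ by a transfer argument as in Lemma \ref{l6} of the commented-out material), and using the Künneth/weight decomposition $M(A)=\bigoplus_i \h^i(A)$ with $\h^i(A)=\Sym^i \h^1(A)$, one reduces to understanding $\Hom(\h^i(A),\Q(n)[n])$. For $X$ of abelian type all the standard conjectures that are available (Kimura–O'Sullivan finite-dimensionality, semisimplicity of numerical motives, etc.) hold, and crucially the Tate conjecture in codimension $n$ for $X$ — which, under abelian type, is equivalent to the Beilinson–Tate conjecture, hence forces numerical and homological (and $\otimes\Q$ rational) equivalence to coincide in that codimension. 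So $H^0(X,\sK_n^M)\otimes\Q$ injects into (in fact, via the edge maps of the coniveau/Gersten spectral sequence, is controlled by) a piece of the motive of $X$ that is detected by an $l$-adic realization sitting in $H^0_\cont$ of a positive-dimensional projective variety, and the Weil conjectures (purity/weights) make that piece vanish: $H^0_\cont(\bar X,\Q_l(n))=0$ for $n>0$ and $\bar X$ of positive dimension, exactly the mechanism behind the reducedness of $\sH_n\otimes\Q$ in Proposition \ref{p5}. Concretely, I expect the argument to run: abelian type $+$ Tate in codimension $n$ $\Rightarrow$ the relevant Hom group in $\DM^\eff_{\Q}$ is computed by numerical motives $\Rightarrow$ it equals $\Hom$ in the category of Grothendieck motives, which by semisimplicity is a sum of $\Hom(\L^{n},\h^{2n}(\text{summand}))$, and the parity/weight constraints from the abelian variety plus $n\ge 2$ kill it.

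An alternative and probably cleaner route: use that for $X$ of abelian type, by results on the coniveau filtration for abelian varieties (Beauville's decomposition, or the fact that $CH_0$ and more generally the ``lowest weight'' pieces are as predicted), the sheaf $\sK_n^M\otimes\Q$ on such $X$ is already birationally trivial — i.e. $(\sK_n^M\otimes\Q)(X)$ depends only on the generic point via a map that one shows is zero because the transcendence-degree-$0$ part $K_n^M$ of a finite field is torsion for $n\ge 2$ (Milnor $K$-theory of finite fields: $K_n^M(\F_q)=0$ for $n\ge 2$). Indeed $H^0(X,\sK_n^M)$ for connected projective $X$ with function field $F$ of transcendence degree $d$ is cut out inside $K_n^M(F)$ by all the residue conditions; the abelian type hypothesis, via the Tate conjecture, should let one propagate the vanishing $K_n^M(\text{finite field})\otimes\Q=0$ for $n\ge 2$ up the tower. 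I would present the first route as the main one.

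The hard part will be making precise the passage from ``Tate conjecture in codimension $n$ for $X$'' to a statement about the birational motive or the $\sK_n^M$-cohomology that actually forces $H^0(X,\sK_n^M)\otimes\Q=0$: one must know that for $X$ of abelian type the Tate conjecture in codimension $n$ implies the Beilinson conjecture in codimension $n$ (so that numerical $=$ rational equivalence there), and then that the group $H^0(X,\sK_n^M)\otimes\Q$, which is \emph{a priori} a subgroup of $H^n(X,\sK_n^M)\otimes\Q = CH^n(X)\otimes\Q$ only through an auxiliary device, actually maps \emph{injectively} into a numerically trivial, homologically controlled quotient — equivalently, that the coniveau spectral sequence degenerates enough. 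I expect the bookkeeping of motives (getting from $M(X)$ a summand of $M(A)$ to the precise vanishing of $\Hom(M(X),\Q(n)[n])$ restricted to the birational part) to be the only real obstacle; everything else is assembling the Weil conjectures, finite-dimensionality, and the already-proven reducedness mechanism of Proposition \ref{p5}.
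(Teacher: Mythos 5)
Your first route correctly identifies the main ingredients the paper uses—Kimura finite-dimensionality to lift a decomposition of $h_\num(X)$ into simple motives to $\sM_\rat$, semisimplicity of $\sM_\num$, and the Tate conjecture—but you stop short of the single idea that actually makes the argument close: the paper considers the direct summands $\DM^\eff(\Phi(\tilde S_i),\sK_n^M[0])$ of $H^0(X,\sK_n^M)$ and uses that \emph{Frobenius acts on this Hom group in two ways which must agree by naturality}: on the target $\sK_n^M[0]$ it acts by multiplication by $q^n$, while on the source $\Phi(\tilde S_i)$ its action is annihilated by a power of the minimal polynomial $\Pi_i$ of Frobenius on the simple numerical motive $S_i$. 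If $\Pi_i(q^n)\ne 0$ the summand is therefore torsion (hence dies after $\otimes\Q$), and the only remaining case $\Pi_i=T-q^n$ is exactly where the Tate conjecture enters: it produces a nonzero morphism $\tilde S_i\to\L^n$, which by Schur's lemma and Kimura nilpotence is an isomorphism in $\sM_\rat$, whence $\Phi(\tilde S_i)=\G_m^{\otimes n}[n]$ and $\DM^\eff(\G_m^{\otimes n}[n],\sK_n^M[0])=0$. Your vague appeal to ``parity/weight constraints from the abelian variety'' and to the coniveau spectral sequence does not supply this mechanism; you would need to make precise why each summand vanishes, and the Frobenius-eigenvalue comparison is the missing engine.

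Your ``alternative and probably cleaner route'' does not work: for a connected smooth projective $X$ with function field $F$ of transcendence degree $d>0$, $K_n^M(F)\otimes\Q$ is enormous, and $H^0(X,\sK_n^M)$ is the subgroup cut out by residue conditions at all codimension-one points. The vanishing $K_n^M(\F_q)=0$ for $n\ge 2$ says nothing directly about this subgroup; propagating it ``up the tower'' is exactly the content of the theorem and cannot be obtained by pure bookkeeping. As written, the proposal is a strategy sketch (you say as much yourself) rather than a proof: the key Frobenius argument and the identification of the exceptional summand with $\L^n$ via the Tate conjecture need to be made explicit, along the lines of \cite[Lemma 1.6]{cell}, which the paper follows.
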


\begin{proof}  It is analogous to that of \cite[Lemma 1.6]{cell} or \cite[Th. 5.4]{ladic}, so we only sketch it. %By \cite[Cor. 7.3.2]{birat-tri} (already used in the proof of Proposition \ref{p5}), 
We have
\begin{equation}\label{eq8}
%H^0(X,\sK_n^M)\simeq \DM^\o(M^\o(X),R_\nr \sK_n^M[0)).
H^0(X,\sK_n^M)\simeq \DM^\eff(M(X),\sK_n^M[0)).
\end{equation}

Write $\sM_\rat$ (resp. $\sM_\num$) for the category of pure motives over $k$ with rational coefficients modulo rational (resp. numerical) equivalence.  Let $\bigoplus_{i\in I} S_i$ be a decomposition of 
%$h^\o_\num(X)\in \sM^\o_\num$ 
$h_\num(X)\in \sM_\num$ into a direct sum of simple motives. By 
%Lemma \ref{l8}, 
Kimura's nilpotence theorem \cite[Prop. 7.5 and Ex. 9.1]{kim}, lift this decomposition to an isomorphism $h_\rat(X)\simeq \bigoplus_{i\in I} \tilde S_i$ in $\sM_\rat$. If 
%$\Phi^\o:\sM^\o_\rat\to \DM^\o$ 
$\Phi:\sM_\rat^\eff\to \DM^\eff$ 
is the natural functor, we thus have
\[%M^\o(X) \simeq \bigoplus_{i\in I} \Phi^\o(\tilde S_i). 
M(X) \simeq \bigoplus_{i\in I} \Phi(\tilde S_i).\]

On the direct summand 
%$\DM^\o(\Phi^\o(\tilde S_i),R_\nr \sK_n^M[0))$ 
$\DM^\eff(\Phi(\tilde S_i),\sK_n^M[0))$ of the right hand side of \eqref{eq8}, the action of Frobenius on the left and right term of the Hom induces the same action on the Hom, by naturality. Its action on 
%$R_\nr \sK_n^M[0)$ 
$\sK_n^M[0)$ is multiplication by $q^n$ (where $q=|k|$), while its action on $\Phi(\tilde S_i)$ is killed by a suitable power of the minimal polynomial $\Pi_i$ of the Frobenius endomorphism of $S_i$. Therefore, if $\Pi_i(q^n)\ne 0$ then this direct summand is torsion.

The remaining case is the one which involves the Tate conjecture. Namely, suppose that $\Pi_i = T-q^n$. Then, inside $H^{2n}(X_{\bar k},\Q_l)$, the geometric Frobenius acts on the summand $H^{2n}((\tilde S_i)_{\bar k},\Q_l)$ by multiplication by $q^n$. By the Tate conjecture this corresponds to an element of $CH^n(X)\otimes \Q_l$, hence to a nonzero morphism $\tilde S_i\to \L^n$; by Schur's lemma it is an isomorphism modulo numerical equivalence, hence also modulo rational equivalence again by Kimura nilpotence. But $\Phi(\L^n)=\G_m^{\otimes n}[n]$, hence $\DM^\eff(\Phi(\tilde S_i),\sK_n^M[0])=0$ again.
\end{proof}

Since the Tate conjecture obviously holds if $n\ge \dim X$, we get:

\begin{cor}\label{c3} Let $X$ be a smooth $k$-surface which is birational to a smooth projective surface of abelian type. Then the map
\[H^0(X,K_n^M)\otimes \Q_l\to H^n_\cont(X,\Q_l(n))\]
induced by \eqref{eq-1} is bijective for any $n\ge 0$.
\end{cor}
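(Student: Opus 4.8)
The statement to prove is Corollary \ref{c3}: for a smooth $k$-surface $X$ birational to a smooth projective surface of abelian type, the map $H^0(X,K_n^M)\otimes \Q_l\to H^n_\cont(X,\Q_l(n))$ of \eqref{eq-1} is bijective for all $n\ge 0$. The strategy is to combine Theorem \ref{t0Q} (or rather its underlying Theorem \ref{t5}, which gives the isomorphism at the level of the \emph{reduced} homotopic modules) with the vanishing \eqref{eq7} supplied by Theorem \ref{t6}, the point being that the surjection \eqref{eq5} becomes an isomorphism precisely when $\sK_n^M\otimes\Q_l$ is already reduced, and Theorem \ref{t6} says exactly that for surfaces.

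\emph{Step 1: reduce to the birational model and handle small $n$.} First I would note that both sides of the displayed map are birational invariants in the relevant sense: by the remark following Proposition \ref{p6}, $\sF(X)\iso \sF(X')$ when $\sF\in\HI^\o$ and $X,X'$ are smooth with a common smooth compactification, and more to the point the isomorphy asserted in Theorem \ref{t5}, evaluated at $X$, only depends on a smooth projective birational model by the argument of its proof (alteration plus Proposition \ref{p6}). So it suffices to treat $X$ itself smooth projective of abelian type of dimension $2$; the cases $n=0,1$ are already covered (for $n=0$ trivially, for $n=1$ by the computation $H^0(X,\sK_1^M)=E^*$ with $E$ finite, as used in the proof of Theorem \ref{t4}), so assume $n\ge 2$.

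\emph{Step 2: verify the hypotheses of Theorem \ref{t6} and conclude reducedness.} For a smooth projective surface $X$ and $n\ge 2=\dim X$, the Tate conjecture for $X$ in codimension $n$ holds trivially (the relevant Chow and cohomology groups vanish, as noted just before the corollary). Since $X$ is of abelian type by hypothesis, Theorem \ref{t6} gives $H^0(X',\sK_n^M)\otimes\Q=0$ for every connected smooth projective $X'$ birational to $X$—in fact the hypotheses are stable enough that one gets the vanishing for all connected smooth projective surfaces of abelian type. As explained at the start of Section \ref{s7}, this vanishing for all connected smooth projective $k$-varieties of the relevant kind is equivalent to $\sK_n^M\otimes\Q$ being reduced in the sense of Definition \ref{d1}; hence $\sK_n^M\otimes\Q_l$ is reduced, and a fortiori the first map in \eqref{eq5}, namely $(\sK_n^M)\otimes\Q_l\surj(\sK_n^M)^\rd\otimes\Q_l$, is an isomorphism for these $n$ (here one uses Theorem \ref{t1}, whose left adjoint is the identity on already-reduced homotopic modules).

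\emph{Step 3: splice together.} By Theorem \ref{t5}, the second map of \eqref{eq5} is an isomorphism for every $n$; combining with Step 2, the composite in \eqref{eq5}, i.e. \eqref{eq-1}$\otimes\Q_l$, is an isomorphism of Nisnevich sheaves for $n\ge 2$ (and for $n=0,1$ directly), whence an isomorphism on global sections of $X$. Finally, by the complement stated right after Theorem \ref{t0} (using \cite[Th. 14.24 and 22.2]{mvw} and \eqref{eq0}), $H^0(X,\sK_n^M)\otimes\Q_l$ maps isomorphically to $H^n_\cont(X,\Q_l(n))$. The only genuinely non-formal input is Theorem \ref{t6}, and within that the Kimura-finiteness argument for motives of abelian type; everything else here is bookkeeping, so \textbf{the main obstacle is really just making sure the birational-invariance reduction in Step 1 is clean}—in particular that "birational to a surface of abelian type" lets one apply Theorem \ref{t6} to a genuinely smooth projective model, which it does since "of abelian type" is a birational invariant for smooth projective varieties.
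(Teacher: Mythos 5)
Your Step 2 has a genuine gap. You argue that Theorem \ref{t6} gives the vanishing $H^0(X',\sK_n^M)\otimes\Q=0$ for all connected smooth projective surfaces of abelian type, and then claim that "this vanishing for all connected smooth projective $k$-varieties of the relevant kind is equivalent to $\sK_n^M\otimes\Q$ being reduced." But the equivalence at the start of Section \ref{s7} requires the vanishing for \emph{every} connected smooth projective $k$-variety, not merely surfaces of abelian type. Since that vanishing is open in general (as the paper emphasizes), you cannot conclude that the sheaf $\sK_n^M\otimes\Q$ is reduced, and so the first arrow of \eqref{eq5} is not known to be an isomorphism globally. Consequently your splice in Step 3 does not go through.

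The paper's proof avoids this trap. It applies Abhyankar resolution to get a smooth projective compactification $\bar X$ of $X$ with strict normal crossings complement $Z=\bigcup Z_i$, and then observes that \emph{the specific strata} $\bar X$ and $Z_J$ (the surface, its curves, and points) are all of abelian type — this is the only place where birational invariance of "abelian type" is used, and only for surfaces (smooth projective curves are automatically of abelian type), not for general varieties as your final parenthetical asserts. By Theorem \ref{t6}, the group $H^0(Z_J,\sK_m^M)\otimes\Q$ vanishes for each such $Z_J$ and each $m>0$. This is precisely the input needed to make the spectral-sequence degeneration of Proposition \ref{t2} go through with $\sF_n=\sK_n^M\otimes\Q_l$ in place of $(\sK_n^M)^\rd\otimes\Q_l$: one does not need the homotopic module to be reduced globally, only the relevant $E_1$-terms (which are $H^0$ on the strata) to vanish, and those strata are under control. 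Running the proof of Theorem \ref{t4} with this local input then gives the bijectivity directly. In short, replace your global reducedness claim with the local vanishing on the strata $Z_J$ of a fixed compactification, and the argument becomes correct.
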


\begin{proof} By Abhyankar resolution (and embedded resolution of curves), we are in the situation at the beginning of §\ref{s1}; moreover, since smooth projective curves are of abelian type, to be of abelian type is a birational invariant of smooth projective surfaces, so that $\bar X$ and the $Z_i$ are of abelian type in loc. cit. By Theorem \ref{t6}, we may therefore run the proof of Theorem \ref{t4} by taking $\sF_n=\sK_n^M\otimes \Q_l$ instead of  $(\sK_n^M)^\rd\otimes \Q_l$.
\end{proof}

\begin{ex}\label{ex1} A smooth projective surface such that $b_2=\rho$ is of abelian type if and only if it verifies Bloch's conjecture (e.g. if it is not of general type). Fermat surfaces are of abelian type, etc.
\end{ex}

\enlargethispage*{30pt}


\begin{thebibliography}{II}
\bibitem{deglise} F. Déglise {\it Modules homotopiques}, Doc. Math. {\bf 16} (2011), 411--55.
\bibitem{dJ} A.J. de Jong {\it Smoothness, semi-stability, and alterations}, Publ. Math. IHES {\bf 83} (1996), 51--93.
\bibitem{ev-ms} P. Elbaz-Vincent, S. Müller-Stach {\it Milnor K-theory of rings, higher Chow groups and
applications}, Invent. Math. {\bf 148} (2002), 177--206.
\bibitem{arason} H. Esnault, B. Kahn, M. Levine, E. Viehweg {\it The Arason invariant and mod $2$ algebraic cycles}, JAMS {\bf 11} (1998), 73--118.
\bibitem{tatefil} A. Huber, B. Kahn {\it The slice filtration and mixed Tate motives}, Compositio Math. {\bf 142} (2006), 907--936.
\bibitem{jannsen} U. Jannsen {\it Continuous étale cohomology}, Math. Ann. {\bf 280} (1988), 207--245.
%\bibitem{jannsen2} U. Jannsen {\it Motives, numerical equivalence and semi-simplicity}, Invent. Math. {\bf 107} (1992), 447--452.
\bibitem{tate} B. Kahn {\it A sheaf-theortic reformulation of the Tate conjecture}, preprint, 1998, \url{https://arxiv.org/abs/math/9801017}.
\bibitem{glr} B. Kahn {\it The Geisser-Levine method revisited and algebraic cycles over a finite field},
Math. Ann. {\bf 324} (2002), 581--617.
\bibitem{cell} B. Kahn {\it Équivalences rationnelle et numérique sur certaines variétés de type abélien sur un corps fini}, Ann. Sci. Éc. Norm. Sup. {\bf 36} (2003), 977--2002.
\bibitem{kcag} B. Kahn {\it Algebraic $K$-theory, algebraic cycles and arithmetic geometry}, 
Handbook of $K$-theory (D. Grayson, E. Friedlander, eds), Vol. 1, Springer, 2004, 351--428.
%\bibitem{smash} B. Kahn {\it Some remarks on the smash-nilpotence conjecture}, to appear in Indag. Math. (Murre memorial volume).
\bibitem{cycle-etale} B. Kahn {\it Classes de cycles motiviques étales}, Alg. Numb. Theory {\bf 6} (2012), 1369--1407.
\bibitem{ladic} B. Kahn {\it On the injectivity and non-injectivity of the $l$-adic cycle class maps}, \url{https://arxiv.org/abs/2307.13281}, to appear in the procedings of the BIRS-CMO Oaxaca ``Moduli, Motives and Bundles" (September 2022), Cambridge LMS Lecture Notes Series.
%\bibitem{birat-pure} B. Kahn, R. Sujatha {\it Birational motives, I: pure birational motives}, Ann. of $K$-theory {\bf 1} (2016), 379--440.
\bibitem{birat-tri} B. Kahn, R. Sujatha {\it Birational motives, II: triangulated birational motives}, IMRN {\bf 2017 (22)}, 6778--6831.
\bibitem{rqnr} B. Kahn, R. Sujatha {\it The derived functors of unramified cohomology}, Selecta Math. {\bf 24(2)} (2018), 1121--1146.
\bibitem{kerz} M. Kerz {\it Milnor $K$-theory of local rings with finite residue fields}, J. Algebraic Geom. {\bf 19} (2010), 173--191.
\bibitem{kim} S. I. Kimura {\it Chow groups are finite dimensional, in some sense}, Math. Ann. {\bf 331} (2005) 173--201.
\bibitem{mvw} C. Mazza, V. Voevodsky, C. Weibel Lecture notes on motivic cohomology, Clay Math. Monographs {\bf 2}, 2006.
%\bibitem{rost} M. Rost {\it Chow groups with coefficients}, Doc. Math.
\bibitem{Tate} J. Tate {\it Relations between $K_2$ and Galois cohomology},
Invent. Math. {\bf 36} (1976), 257--274.
\bibitem{voepre} V. Voevodsky {\it Cohomological theory of presheaves with transfers}, Ch. 3 of Cycles, transfers, and motivic homology theories, Ann. of Math. Studies {\bf 143}, Princeton Univ. Press, 2000.
\bibitem{voecan} V. Voevodsky {\it Cancellation theorem}, Doc. Math. (2010) special volume for  Andrei A. Suslin sixtieth birthday, 671--685.
\bibitem{voem} V. Voevodsky {\it On motivic cohomology with $\Z/l$-coeﬃcients}, Annals of Math.
{\bf 174} (2011), 401--438.
\end{thebibliography}
\end{document}